
\documentclass[final,3p,times]{./elsarticle}





\usepackage{amsmath}
\usepackage{amssymb}
\usepackage{amsthm}
\usepackage{esint}
\usepackage{mathbbol}     
\usepackage{bbm}

\numberwithin{equation}{section}

\newtheorem{theorem}{Theorem}[section]
\newtheorem{lemma}{Lemma}[section]
\newtheorem{corollary}{Corollary}[section]
\newtheorem{proposition}{Proposition}[section]
\theoremstyle{remark}
\newtheorem{remark}{Remark}[section]

\usepackage[colorlinks=true]{hyperref}
\usepackage[nameinlink]{cleveref}
\crefformat{equation}{(#2#1#3)}
\crefrangeformat{equation}{(#3#1#4)--(#5#2#6)}
\crefmultiformat{equation}{(#2#1#3)}{ and~(#2#1#3)}{, (#2#1#3)}{ and~(#2#1#3)}
\crefname{claim}{claim}{claims}

\usepackage[normalem]{ulem} 
\newcommand{\abs}[1]{\left\vert #1 \right\vert} 
\newcommand{\gradT}{\smash{\langle |\nabla T|^2 \rangle}} 
\newcommand{\mean}[1]{\left\langle #1 \right\rangle} 
\newcommand{\smallmean}[1]{\smash{\langle #1 \rangle}} 
\newcommand{\dVolume}{\mathrm{d}\vec{x}}
\renewcommand{\vec}[1]{\boldsymbol{#1}} 

\DeclareMathOperator{\Ra}{\textit{R}}
\DeclareMathOperator{\Pe}{\textit{Pe}}
\let\Pr\relax
\DeclareMathOperator{\Pr}{\textit{Pr}}

\newcommand{\Hardy}{\mathcal{H}^1}
\DeclareMathOperator{\BMO}{BMO}

\newcommand{\alert}[1]{{\color{red}#1}}


\journal{Physica D}

\begin{document}

\begin{frontmatter}



\title{Bounds on heat transfer by incompressible flows between balanced sources and sinks}

\affiliation[umich]{organization={Dept. of Mathematics, University of Michigan},
           city={Ann Arbor},
           state = {MI},
           postcode={48109},
           country={USA}}

\affiliation[icl]{organization={Dept. of Aeronautics, Imperial College London},
           city={London},
           postcode={SW7 2AZ},
           country={United Kingdom}}

\affiliation[uic]{organization={Dept. of Mathematics, Statistics, and  Computer  Science, University of Illinois Chicago},
city={Chicago},
           state = {IL},
           postcode={60607},
           country={USA}}

\author[umich]{Binglin Song}
\author[icl]{Giovanni Fantuzzi}
\author[uic]{Ian Tobasco}

\begin{abstract}
Internally heated convection involves the transfer of heat by fluid motion between a distribution of sources and sinks. Focusing on the balanced case where the total heat added by the sources matches the heat taken away by the sinks, we obtain \emph{a priori} bounds on the minimum mean thermal dissipation $\langle |\nabla T|^2\rangle$ as a measure of the inefficiency of transport. In the advective limit, our bounds scale with the inverse mean kinetic energy of the flow. The constant in this scaling law depends on the source--sink distribution, as we explain both in a pair of examples involving oscillatory or concentrated heating and cooling, and via a general  asymptotic variational principle for optimizing transport. Key to our analysis is the solution of a pure advection equation, which we do to find examples of extreme heat transfer by cellular and `pinching' flows. When the flow obeys a momentum equation, our bound is re-expressed in terms of a flux-based Rayleigh number $\Ra$ yielding $\langle |\nabla T|^2\rangle\geq C\Ra^{-\alpha}$.  The power $\alpha$ is $0, 2/3$ or $1$ depending on the arrangement of the sources and sinks relative to gravity.
\end{abstract}




\begin{keyword}
Internal heating \sep
convection \sep
heat transport \sep
advection-diffusion equation \sep
variational methods
%
\end{keyword}

\end{frontmatter}
\section{Introduction}
\noindent
Determining absolute limits on heat transport by a moving fluid is a fundamental scientific challenge. It is motivated not only by questions of planetary physics, e.g., where convection driven by radioactive decay influences plate tectonics and the generation of magnetic fields~\cite{Mulyukova2020,schubert2001mantle}, but also by the search for optimal heat exchangers~\cite{Alben2017jfm,Feppon2021}. Internally heated flows have recently attracted renewed interest after experiments and numerical simulations \cite{Lepot2018,Bouillaut2019,Kazemi2022} revealed that their heat transport can significantly exceed the known limits on  `ordinary' boundary-driven (Rayleigh--B\'enard) convection. For certain well-balanced source--sink profiles of internal heating and cooling,
the flows that set up in response to gravity appear to transport heat at a rate independent of the molecular diffusivity, achieving the `mixing-length' or `ultimate' transport scaling.
However, it remains a challenge to determine theoretically which properties of the internal heating are crucial to achieving highly efficient transport. Indeed, for an arbitrary balanced source--sink profile in an arbitrary fluid domain, it is not at all clear from the outset what transport will result. 

Thinking of the general question of assessing heat transport across a fluid domain, the first challenge is to select a globally-defined yet meaningful diagnostic measure of transport efficiency. Many known quantities that give equivalent measures for boundary-driven convection are no longer comparable for internally heated flows, and can end up following different scaling laws. 
We choose to measure heat transport using the mean-squared temperature gradient averaged over space and time. Since by Fourier's law the diffusive heat flux is controlled by the temperature gradient, it is reasonable to expect that a highly efficient transfer protocol finds a way to minimize temperature gradients overall. Other authors have studied the mean temperature~\cite{Lu2004,Goluskin2012pla,Goluskin2015ih,Whitehead2011jmp,Whitehead2012}, root-mean-squared temperature~\cite{Lepot2018,Bouillaut2019,Miquel2019} or vertical heat flux~\cite{Arslan2021flux,Arslan2022,Kumar2022ih}. For particular choices of source--sink distributions and boundary conditions such quantities are equivalent to our measure~\cite{Goluskin2016book}, but this is not generally true. 

Having selected a measure of transport, one can seek flows optimizing its value, subject to various constraints.
A tractable goal for analysis, that we pursue in this paper, is to produce \emph{a priori} bounds on transport holding for general classes of admissible flows. In particular, we shall derive a lower bound on the mean thermal dissipation of an internally heated flow, which takes into account its mean kinetic energy as well as the shape of the imposed source--sink distribution and the flow domain. We also derive a similar bound holding for buoyancy-driven internally heated convection. We work with balanced source--sink distributions that add and subtract the same amount of heat overall. The case of imbalanced heating has been studied extensively in the literature: bounds on  measures of heat transport  are known for uniform internal heating under a variety of boundary conditions~\cite{Lu2004,Goluskin2012pla,Goluskin2015ih,Whitehead2011jmp,Whitehead2012,Goluskin2016book}, as well as for essentially arbitrary source--sink distributions in a disc with a constant temperature boundary \cite{Tobasco2022}. Bounds on balanced heat transfer have also been obtained, for periodic flows under an assumption of statistical homogeneity and isotropy~\cite{Shaw2007,Thiffeault2004,Doering2006,Thiffeault2008}, and for smooth source--sink distributions that vary only in the gravity direction across a fluid layer~\cite{Miquel2019}. Here, we treat a much broader class of velocities, source--sink functions and flow domains. We also give examples illustrating the sharpness of our bounds.




There are various approaches to \emph{a priori} bounds in fluid mechanics, but for buoyancy-driven convection the relevant results can be traced back at least to the work of Malkus, Howard and Busse~\cite{Malkus1954,Howard1963,Busse1969,Howard1972,Busse1979} as well as to the   `background method' of Doering and Constantin~\cite{Doering1994pre,Constantin1995pre,Doering1996pre}. 
We follow a two-step approach that is similar to the `optimal wall-to-wall' approach of~\cite{Hassanzadeh2014,Tobasco2017,Doering2019}, and also to an approach that has been used with horizontal convection~\cite{Siggers2004,Winters2009,Rocha2019}. First, we drop the momentum equation and optimize heat transfer subject to the advection-diffusion equation alone. The resulting optimal transfer rate depends on the advective intensity, measured in a chosen norm. Then, we restore the momentum equation via a balance law relating the velocity norm to an appropriate Rayleigh number. Algebraic manipulation leads to an \emph{a priori} bound on the heat transport of momentum-conserving flows.

We turn to describe our setup and results. 
Let $\Omega \subset \mathbb{R}^d$ be a bounded Lipschitz domain in dimension $d\geq 2$, and introduce a temperature field $T(\vec{x},t)$ solving the inhomogeneous and non-dimensional advection-diffusion equation with insulating boundary conditions
\begin{equation}\label{eq:intro:heat}
        \begin{cases}
        \partial_{t}T+\vec{u}\cdot\nabla T=\Delta T+f & \text{in }\Omega,\\
        \hat{\vec{n}}\cdot\nabla T=0 & \text{at }\partial\Omega.
        \end{cases}
    \end{equation}
To ensure the source--sink function $f(\vec{x},t)$ is balanced, we set
\begin{equation*}
\fint_\Omega f(\vec{x},t) \dVolume = 0
\end{equation*}
where $\fint_\Omega \cdot \dVolume$ denotes averaging over the flow domain (per the notation in \cref{ss:notation}).
%
The advecting velocity $\vec{u}(\vec{x},t)$ obeys the divergence-free condition $\nabla\cdot\vec{u}=0$ in $\Omega$, along with the no-penetration boundary conditions $\vec{u}\cdot\hat{\vec{n}}=0$ at $\partial\Omega$.
Note $\hat{\vec{n}}$ is the outwards-pointing unit normal to the domain boundary.
These conditions on $f$ and $\vec{u}$ imply that the mean temperature $\fint_\Omega T\dVolume$ is constant in time, and we take it to be zero without loss of generality.

Given this setup, we seek bounds on the mean-squared temperature gradient
\[
\smallmean{|\nabla T|^2} := \limsup_{\tau\to\infty}\,\fint_0^\tau\fint_\Omega |\nabla T(\vec{x},t)|^2 \,\dVolume {\rm d}t
\]
where for definiteness we use the limit superior. 
In \cref{sec:nomomentum}, we derive a pair of `variational' upper and lower bounds:
\begin{equation}\label{e:intro:var-bounds}
    \left\langle
        2f\xi
        -\abs{\nabla\xi}^2
        -|\nabla\Delta^{-1}(\partial_{t}\xi
        +\vec{u}\cdot\nabla\xi)|^{2}
    \right\rangle
    \leq
    \left\langle \abs{\nabla T}^2 \right\rangle
    \leq
    \left\langle
        \abs{\nabla\eta}^2
        +|\nabla\Delta^{-1}(\partial_{t}\eta
        +\vec{u}\cdot\nabla\eta-f)|^{2}
    \right\rangle
\end{equation}
where $\xi(\vec{x},t)$ and $\eta(\vec{x},t)$ are test functions whose choice can be optimized (see \cref{thm:unsteady-VP}).
The operator $\Delta^{-1}$ is the Neumann inverse Laplacian, defined in \cref{ss:notation}.
A version of our lower bound on $\gradT$ with a steady test function $\xi(\vec{x})$ appeared in a previous paper on mixing in periodic domains \cite{Shaw2007}, along with similar bounds on $\smallmean{T^2}$ and $\smallmean{|\nabla \Delta^{-1}T|^2}$ (see also \cite{Thiffeault2004,Doering2006,Thiffeault2008} and \cite{Thiffeault2012} for a review).
To complete the picture, we allow for time-dependent test functions defined on general domains, and also provide the complementary upper bound in \cref{e:intro:var-bounds}. Moreover, we prove in \cref{th:steady-VP} that these bounds are sharp in the steady case $f=f(\vec{x})$ and $\vec{u}=\vec{u}(\vec{x})$, meaning that an optimal choice of test function evaluates $\gradT$. This is the analog of results obtained in~\cite{Tobasco2017,Doering2019,Souza2020} for boundary-driven flows and in~\cite{Tobasco2022} for internally-heated flows with cooled boundaries. Rather than repeat the `symmetrization argument' from these papers,  we present a different and likely more flexible proof in which the test functions  are introduced as Lagrange multipliers for enforcing the advection-diffusion equation (much like the argument in \cite{Shaw2007}).


\Cref{sec:univ-lower-bounds,sec:asymptotics} go on to ask what the variational bounds \cref{e:intro:var-bounds} imply for optimal flows. 
By optimal, we mean flows that minimize $\gradT$ subject to a constraint on the flow intensity, such as might be given by fixing the value of the mean kinetic energy $\langle |\vec{u}|^2\rangle$. To ease the presentation, we treat steady source--sink functions $f(\vec{x})$ that are not identically zero from here on; we expect our results can be extended to unsteady sources/sinks, and we remark on this below.
\Cref{sec:univ-lower-bounds} starts by bounding $\gradT$ in terms of the mean kinetic energy: we show that there are positive constants $C_1$, $C_2$ and $C_3$ depending on the domain $\Omega$, the dimension $d$ and the source--sink distribution $f$ such that
\begin{equation}\label{e:intro:lb-ke}
    \langle |\nabla T|^2 \rangle \geq \frac{C_1}{C_2+ C_3 \langle |\vec{u}|^2 \rangle }.
\end{equation}
This follows from \cref{th:lb-BMO} and the subsequent discussion on how to select $\xi$. A similar bound was proved in \cite{Shaw2007, Doering2006, Thiffeault2012} for periodic and statistically homogeneous, isotropic flows. 
We too think of the ratios $C_1/C_2$ and $C_1/C_3$ as (squared) norms that detect the structure of the sources and sinks. In the conductive limit $\langle |\vec{u}|^2\rangle \to 0$, the first ratio dominates the bound and can be chosen, as usual, to involve a negative Sobolev norm (see \cref{e:lb:xi-Hm1}). In the advective limit $\langle |\vec{u}|^2\rangle \to \infty$ the second ratio is more important; we explain how to choose it based on a Hardy space norm (see \cref{e:lb:xi-Hardy}). While this norm is not completely unlike the $L^1$-norm, it produces a strictly better bound for problems with point-like sources and sinks. See \cref{ss:BMO-bound} for its definition, and \cref{ss:lower-bounds-BMO-hardy} for the proof of \cref{e:intro:lb-ke}.

\begin{figure}
    \centering
    \includegraphics[width=.9\textwidth]{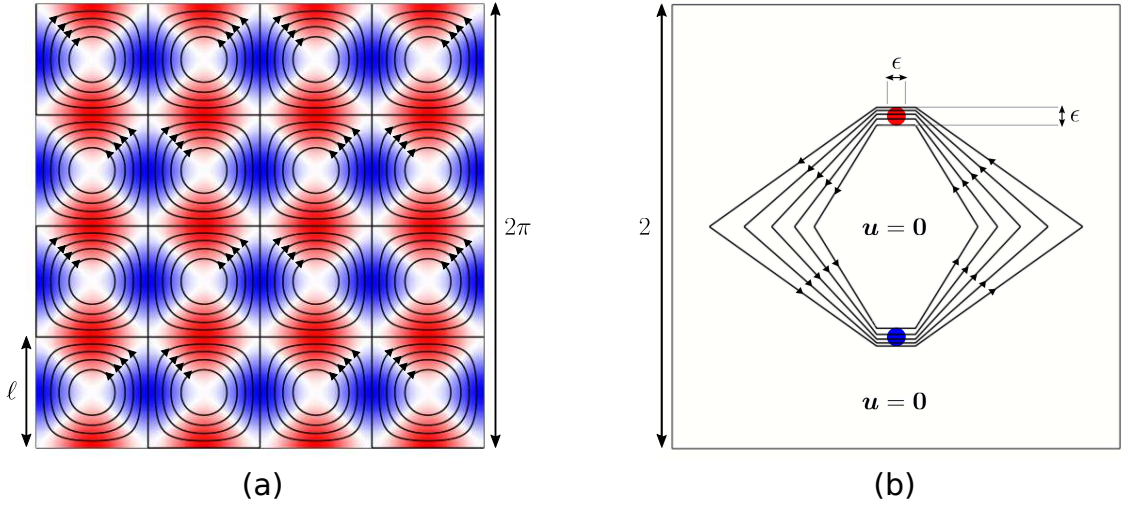}
       \caption{Examples of nearly optimal flows. Heat sources (red) and sinks (blue) oscillate sinusoidally on a scale $\sim \ell$ in (a) and are concentrated in regions of size $\sim\epsilon$ in (b). Black lines with arrows show  streamlines of the cellular flow achieving $\langle |\nabla T|^2 \rangle\sim \Pe^{-2}$ in (a) and of the `pinching' flow achieving $\langle |\nabla T|^2\rangle \sim \log^2(\epsilon^{-1})\Pe^{-2}$ in (b). The squared P\'eclet number $\Pe^2$ sets the kinetic energy of the flows.
    \label{fig:flows}}
\end{figure}

To help clarify our lower bound, and to explain what it takes to find optimal (or at least nearly optimal) flows, we study a pair of examples in  \cref{ss:lower-bounds-example} involving sinusoidal heating and cooling or approximate point sources and sinks. \Cref{fig:flows} illustrates the setups we have in mind: the heating/cooling in panel (a) varies sinusoidally on a scale $\sim l$; the point-like sources and sinks in panel (b) are concentrated in regions of size $\sim\epsilon$ (the figure shows discs for simplicity). Given these setups, we prove upper and lower bounds on the minimum mean-square temperature gradient that match in terms of their scalings with respect to each example's parameters. Precisely, we show that
\begin{equation}\label{e:intro:examples}
\min_{\substack{\vec{u}(\vec{x},t) \\ \smallmean{|\vec{u}|^2} \leq \Pe^2 \\ \partial_t T + \vec{u}\cdot\nabla T = \Delta T + f}} \gradT \sim
\begin{cases}
    \min\left\{\ell^2,\frac{1}{\Pe^2} \right\}
    &\text{for sinusoidal heating (\cref{th:cellular-flows})}
    \\[1ex]
    \min
    \left\{
        \log\frac1\epsilon,\,
        \left(\log\frac1\epsilon\right)^2\frac{1}{\Pe^2}
        \right\}
    &\text{for concentrated heating (\cref{th:pinching})}
\end{cases}
\end{equation}
with prefactors that are independent of all parameters. (The notation $X\sim Y$ means that there are numerical constants $C,C'>0$ such that $CY \leq X \leq C'Y$). Parsing  \cref{e:intro:lb-ke} in each example produces the lower bound half of \cref{e:intro:examples}. For the matching upper bounds, we construct the flows illustrated in \cref{fig:flows} and select test functions $\eta$ to estimate their transport via \cref{e:intro:var-bounds}. We use a cellular flow structure for sinusoidal heating and a pinching effect for approximate point sources and sinks. It was the analysis of pinching flows that led us to the Hardy space norm; other, more familiar norms gave strictly sub-optimal bounds. 


The search for flows optimizing heat transfer is an active area of research; see \cite{Tobasco2022,Marcotte2017,Iyer2022} for flows enhancing heat transport with imbalanced heating, and \cite{Iyer2010} for flows inhibiting heat transport in non-disc domains. 
In this paper, we base our constructions on an ability to solve the pure and steady advection system
\begin{equation}\label{e:intro:pure-advection}
    \begin{cases}
        \vec{u}_0\cdot\nabla T_0=f & \text{in }\Omega\\
        \nabla \cdot \vec{u}_0 =0 & \text{in }\Omega\\
        \vec{u}_0\cdot\hat{\vec{n}} = 0& \text{at }\partial\Omega
     \end{cases}.
 \end{equation}
Though understanding precisely when this system has a solution is a difficult and open problem (see \cite{Lindberg2017_Hardy} for a recent account), it is not so difficult to show its relevance for optimizing heat transfer in the advective limit. \Cref{sec:asymptotics} studies this limit in detail, and achieves the following conditional result: if \cref{e:intro:pure-advection} admits a (regular-enough) solution, then
\begin{equation*}
    \lim_{\Pe\to\infty}
    \min_{\substack{\vec{u}(\vec{x}) \\ \|\vec{u}\|_X\leq\Pe \\ \vec{u}\cdot\nabla T = \Delta T + f}}
    \, \Pe^2 \fint_\Omega |\nabla T|^2\dVolume
    =
    \min_{\substack{\vec{u}_0(\vec{x}),T_0(\vec{x}) \\ \|\vec{u}_0\|_{X} \leq 1\\
    \vec{u}_0\cdot\nabla T_0=f
    }
    }\, \fint_\Omega |\nabla T_0|^2\dVolume.
\end{equation*}
In the right-hand problem, the minimization is over all solutions of \cref{e:intro:pure-advection}. After a rescaling, its optimizers give the limit points of almost-minimizing sequences on the left; see \cref{th:limitbd} for the precise statement.
Here, we allow for general families of steady velocities belonging to a Banach space $X$ that is continuously embedded into $L^d(\Omega)$, amongst other requirements. The dimensionless parameter $\Pe=U^2L/\kappa$ is the P\'eclet number, where $U$ is a characteristic velocity scale, $L$ is a characteristic lengthscale and $\kappa$ is the thermal diffusivity. The result captures the intuition that optimal velocities find a way to minimize thermal dissipation while achieving (essentially) perfect advection, and shows how to compute the optimal prefactor in the scaling law $\min\, \gradT \sim C(\Omega,d,f) \Pe^{-2}$. 

Finally, in \cref{sec:buoyancy} we bound the heat transfer of momentum-constrained flows driven by a steady balanced source--sink function $f(\vec{x})$ and a steady conservative gravitational acceleration $\vec{g}(\vec{x})$. In addition to the advection-diffusion equation \cref{eq:intro:heat} for the temperature $T$, we let the velocity $\vec{u}$ solve the non-dimensional Boussinesq equation
%
\begin{equation*}
	\Pr^{-1} \left( \partial_t \vec{u} + \vec{u}\cdot \nabla \vec{u} \right) = \Delta \vec{u} + \Ra T \vec{g} - \nabla p
\end{equation*}
with $\vec{g}=\nabla\varphi$. The choice $\varphi=z$ yields the usual Boussinesq equations with gravity in the negative $z$-direction, and we allow for other choices as well.
The non-dimensional parameters $\Pr$ and $\Ra$ are the usual Prandtl number and a `flux-based' Rayleigh number \cite{Goluskin2016book} measuring the strength of the sources and sinks relative to diffusion: $\Pr = \nu/\kappa$ and $\Ra = \alpha L^5 Q/(\nu \kappa^2)$.
Again, $L$ is a characteristic lengthscale and $\kappa$ is the thermal diffusivity; also, $\nu$ is the kinematic viscosity,  $\alpha$ is the thermal expansion coefficient and $Q$ is a characteristic heating and cooling rate per unit volume (which sets the dimensional amplitude of $f$).
After deriving a set of basic balance laws, we relate the mean enstrophy and energy of the flow to the Rayleigh number, and thereby obtain a trio of Rayleigh-dependent lower bounds (\cref{th:mom:bounds}).
These bounds are in the general form
\begin{equation*}
    \gradT \geq  \frac{C(\Omega,d,f,\vec{g})}{\Ra^{\alpha}}
\end{equation*}
with $\alpha = 0$, $2/3$ or $1$ depending on the sign of $\langle f\varphi  \rangle$ and for large enough $R$; we give intuition for this  below. 

 \Cref{sec:discussion} is a conclusion section that includes a discussion of open questions and future directions of research. 



\subsection{Notation}\label{ss:notation}
\noindent
Here we summarize some common notational conventions. We use $X \vee Y$ and $X\wedge Y$ for the maximum and minimum of two quantities. 
We write $X\lesssim Y$ if there is a constant $C$ with $X \leq C Y$, and  $X\sim Y$ if $X\lesssim Y \lesssim X$. If the constant $C$ depends on a parameter $a$, we indicate this by writing $X \lesssim_a Y$. The notation $X \ll Y$ means that $X/Y \to 0$ in a limit. Likewise, $o(X)$ is a quantity tending to zero upon division by $X$.

The $d$-dimensional volume of a set $A$ is $|A|$. The average of $\varphi$ over $A$ is then
\[
\fint_{A} \varphi \dVolume = \frac{1}{|A|}\int_A\varphi\dVolume.
\]
The notations $\langle \varphi \rangle_\tau$ and $\mean{\varphi}$ give spatial-temporal averages over $\Omega$ and up to time $\tau$, or across infinite time, respectively:
\begin{equation*}
	\mean{\varphi}_\tau = \fint_0^\tau \fint_\Omega \varphi(\vec{x},t) \dVolume {\rm d}t\quad\text{and}\quad \mean{\varphi} = \limsup_{\tau \to \infty}\, \mean{\varphi}_\tau.
\end{equation*}
We only use the limit superior long-time average.

As usual, $L^p(\Omega)$ is the space of functions whose $p$-th power is integrable on $\Omega$. We write $H^1(\Omega)$ for the Sobolev space of functions in $L^2(\Omega)$ whose weak derivatives are in $L^2(\Omega)$. We use $H^{-1}(\Omega)$ for the space of continuous linear functionals on $H^1(\Omega)$ that are mean-free, meaning that they take constant functions to zero, and define the dual norm
\begin{equation*}\label{e:Hm1-norm}
    \|g\|_{H^{-1}(\Omega)} := \max_{ \varphi(\vec{x})}\,
    \frac{\int_\Omega g \varphi\dVolume}{\left(\int_\Omega |\nabla\varphi|^2\dVolume\right)^\frac12}  = \left(\int_\Omega \abs{\nabla \Delta^{-1} g}^2 \dVolume \right)^\frac12.
\end{equation*}
This and other such maximizations are performed over non-constant $\varphi\in H^1(\Omega)$. Not every $g\in H^{-1}(\Omega)$ is a function, in which case the `integral' $\int_\Omega g \cdot \dVolume$ stands for the action of $g$ as a functional. 
Hardy and BMO spaces will be used; see \cref{ss:BMO-bound} for definitions and a brief review.

Finally, since this paper deals only with insulating temperature boundary conditions, we write $\Delta^{-1}$ for the inverse Laplacian operator with Neumann boundary conditions. In formulas, $h=\Delta^{-1}g$ if $\fint_\Omega h \dVolume = 0$ and
\begin{equation*}
    \begin{cases}
    \Delta h=g & \text{in }\Omega\\
    \hat{\vec{n}}\cdot\nabla h=0 & \text{at }\partial\Omega
    \end{cases}.
\end{equation*}

\subsection{Acknowledgements}
\noindent
We thank David Goluskin and John Craske for insightful conversations about the physics of internally heated flows, and dedicate this article to the late Professor Charlie Doering, who opened the door for us to the world of mathematical fluid dynamics.
G.F.\ was supported by an Imperial College Research Fellowship and thanks the Isaac Newton Institute for Mathematical Sciences, Cambridge for support and hospitality during the programme “Mathematical aspects of turbulence: where do we stand?” (EPSRC grant number EP/R014604/1) where work on this paper was undertaken.
I.T.\ was supported by National Science Foundation Award DMS-2025000.

\section{\alert{Variational} bounds on heat transfer in an insulated domain}
\label{sec:nomomentum}
\noindent
\Cref{ss:s2-unsteady-bounds} derives upper and lower bounds on the mean square thermal dissipation $\gradT$ of a general unsteady incompressible flow $\vec{u}(\vec{x},t)$ and a general unsteady and balanced source--sink distribution $f(\vec{x},t)$.
These bounds involve a pair of test functions,
which can be optimized based on the details of $\vec{u}$ and $f$.
In the steady case where $\vec{u}=\vec{u}(\vec{x})$ and $f=f(\vec{x})$, the optimization evaluates $\langle |\nabla T|^2\rangle$ so that the bounds are sharp (see \cref{ss:s2-steady-bounds}).
One can also choose the test functions to bound $\gradT$ in terms of a bulk measure of the flow intensity, such as the mean kinetic energy $\smallmean{|\vec{u}|^2}$; we do so starting in \cref{sec:univ-lower-bounds}. 

\subsection{Variational bounds for unsteady flows and source--sink distributions}
\label{ss:s2-unsteady-bounds}
\noindent
Define the admissible set
\begin{equation}
\label{e:test-function-space}
    \mathcal{A}=\left\{ \theta\in L_{\text{loc}}^{2}\left(0,\infty;H^{1}(\Omega)\right):
    \;\partial_{t}\theta\in L_{\text{loc}}^{2}\left(0,\infty;H^{-1}(\Omega)\right),\; \theta(\cdot ,\tau)\in L^\infty(\Omega)\text{ a.e. } \tau,
    \;\lim_{\tau\rightarrow\infty}\frac{1}{\sqrt{\tau}}\|\theta(\cdot,\tau)\|_{L^{2}(\Omega)}
    =0
    \right\}.
\end{equation}
Let $\lambda_1$ be the first non-trivial Neumann eigenvalue of the (negative) Laplacian on $\Omega$. It is the largest constant such that $\lambda_1  \|\theta\|^2_{L^2(\Omega)} \leq \|\nabla \theta\|^2_{L^2(\Omega)}$ for all mean-zero $\theta \in H^1(\Omega)$.

\begin{theorem}
\label{thm:unsteady-VP} Let $\vec{u}(\vec{x},t)$ be divergence-free with $\vec{u} \cdot \hat{\vec{n}} = 0$ at $\partial\Omega$ and $\langle |\vec{u}|^2\rangle <\infty$, and let $f(\vec{x},t)$ satisfy $\fint_\Omega f(\vec{x} ,t)\dVolume = 0$ for $t>0$. Assume there exists $a_0,a_1\in(0,1)$ and $p\in[d/2,\infty]$ such that
    \begin{equation}\label{eq:assumptions-on-f}
        \lim_{\tau\rightarrow\infty}\frac{1}{\tau}\int_{0}^{\tau}e^{-2a_0\lambda_{1}\left( \tau-t \right)}\|f(\cdot,t)\|_{H^{-1}(\Omega)}^2 \, {\rm d}t=0
        \quad\text{and}\quad  \int_{0}^{\tau}\frac{e^{-a_1\lambda_{1}(\tau -t)}}{(\tau-t)^{\frac{d}{2p}}}\|f(\cdot,t)\|_{L^{p}(\Omega)}\,dt<\infty\quad\forall\,\tau>0.
    \end{equation}
Given any weak solution $T(\vec{x},t)$ of
    \begin{equation}\label{eq:adv-diff_weak_forproof}
        \begin{cases}
        \partial_{t}T+\vec{u}\cdot\nabla T=\Delta T+f & \text{in }\Omega\\
        \hat{\vec{n}}\cdot\nabla T=0 & \text{at }\partial\Omega
        \end{cases}
    \end{equation}
   with $T(\cdot ,0)\in L^2(\Omega)$, the upper and lower bounds
    \begin{equation}\label{eq:vp}
        \left\langle
            2f\xi
            -\abs{\nabla\xi}^2
            -|\nabla\Delta^{-1}(\partial_{t}\xi
            +\vec{u}\cdot\nabla\xi)|^{2}
        \right\rangle
        \leq
        \left\langle \abs{\nabla T}^2 \right\rangle
        \leq
        \left\langle
            \abs{\nabla\eta}^2
            +|\nabla\Delta^{-1}(\partial_{t}\eta
            +\vec{u}\cdot\nabla\eta-f)|^{2}
        \right\rangle
    \end{equation}
   hold for all $\eta,\xi\in \mathcal{A}$.
\end{theorem}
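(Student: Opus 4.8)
The plan is to treat $\xi$ and $\eta$ as Lagrange multipliers enforcing the advection--diffusion equation \eqref{eq:adv-diff_weak_forproof} in weak form, and then to complete the square --- the route indicated in the introduction. I would work at a finite horizon $\tau$ throughout, carrying every time-boundary term as an explicit error that vanishes as $\tau\to\infty$, and only pass to the $\limsup$ at the very end; at finite $\tau$ the averages $\langle\cdot\rangle_\tau$ are genuinely linear, so no $\limsup$-versus-$\liminf$ subtleties arise until then. Whenever any of $\langle|\nabla T|^2\rangle$, $\langle|\nabla\xi|^2\rangle$, $\langle|\nabla\Delta^{-1}(\partial_t\xi+\vec u\cdot\nabla\xi)|^2\rangle$, $\langle|\nabla\eta|^2\rangle$ or $\langle|\nabla\Delta^{-1}(\partial_t\eta+\vec u\cdot\nabla\eta-f)|^2\rangle$ is infinite the corresponding bound is trivial, so I may assume each finite. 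The first step is a round of a~priori estimates on $T$. Pairing \eqref{eq:adv-diff_weak_forproof} with $T$ and using $\nabla\cdot\vec u=0$, $\vec u\cdot\hat{\vec n}=0$ gives $\tfrac12\tfrac{d}{dt}\|T\|_{L^2}^2+\|\nabla T\|_{L^2}^2=\int_\Omega fT$; setting aside a fraction $a_0$ of the dissipation and invoking $\lambda_1\|T\|_{L^2}^2\le\|\nabla T\|_{L^2}^2$, Young's inequality, and Gr\"onwall's lemma with weight $e^{2a_0\lambda_1 t}$ yields $\|T(\cdot,\tau)\|_{L^2}^2\lesssim e^{-2a_0\lambda_1\tau}\|T(\cdot,0)\|_{L^2}^2+\int_0^\tau e^{-2a_0\lambda_1(\tau-t)}\|f(\cdot,t)\|_{H^{-1}}^2\,dt$, which is $o(\tau)$ by the first hypothesis in \eqref{eq:assumptions-on-f}; in particular $\tfrac1{\sqrt\tau}\|T(\cdot,\tau)\|_{L^2}\to0$, mirroring the growth condition in $\mathcal A$. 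Testing instead against $|T|^{2k-2}T$ --- the advection term drops out again because $\vec u$ is divergence-free and tangent to $\partial\Omega$ --- and running a Moser iteration driven purely by the Sobolev inequality for $|T|^k$ upgrades $T(\cdot,t)$ to $L^\infty(\Omega)$ for $t$ away from $0$, the bound being controlled by $\|T(\cdot,0)\|_{L^2}$ and the second, $L^p$-weighted quantity in \eqref{eq:assumptions-on-f}, with $p\ge d/2$ the relevant regularity threshold. This $L^\infty$ bound is precisely what makes the advective pairings below absolutely integrable, which is not automatic when only $\vec u\in L^2$.

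Next I would establish three identities, each up to an $o(1)$ error as $\tau\to\infty$. (i)~Time-averaging the energy identity and using $\|T(\cdot,\tau)\|_{L^2}^2=o(\tau)$: $\langle|\nabla T|^2\rangle_\tau=\langle fT\rangle_\tau+o(1)$. (ii)~For any $\theta\in\mathcal A$, pairing \eqref{eq:adv-diff_weak_forproof} with $\theta(\cdot,t)$, integrating by parts in time (the endpoint term is $\lesssim\tfrac1\tau\|T(\cdot,\tau)\|_{L^2}\|\theta(\cdot,\tau)\|_{L^2}=o(1)$) and in space, rewriting the advection term through $\nabla\cdot(T\vec u)$ --- this is where $T(\cdot,t)\in L^\infty$ is used --- and using that $\vec u\cdot\nabla\theta$ has zero mean and that $\langle gT\rangle_\tau=-\langle\nabla\Delta^{-1}g\cdot\nabla T\rangle_\tau$ for mean-free $g$ by the definition of the Neumann inverse Laplacian:
\begin{equation*}
    \langle\nabla T\cdot\nabla\theta\rangle_\tau+\langle\nabla T\cdot\nabla\Delta^{-1}(\partial_t\theta+\vec u\cdot\nabla\theta)\rangle_\tau=\langle f\theta\rangle_\tau+o(1),
\end{equation*}
the mean of $\partial_t\theta$ contributing nothing because $T$ is mean-free. (iii)~The ``null identity'' $\langle\nabla\theta\cdot\nabla\Delta^{-1}(\partial_t\theta+\vec u\cdot\nabla\theta)\rangle_\tau=o(1)$: expanding the $\Delta^{-1}$ pairing turns the left-hand side into $-\langle\theta\,\partial_t\theta\rangle_\tau-\langle\theta\,\vec u\cdot\nabla\theta\rangle_\tau$, and both terms are $o(1)$ --- the first equals $-\tfrac1{2\tau}(\|\theta(\cdot,\tau)\|_{L^2}^2-\|\theta(\cdot,0)\|_{L^2}^2)\to0$ by the defining property of $\mathcal A$, the second equals $-\tfrac1{2\tau}\int_0^\tau\!\int_{\partial\Omega}\theta^2\,\vec u\cdot\hat{\vec n}=0$ by no-penetration.

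The bounds then follow by completing the square at finite $\tau$ and taking $\limsup_\tau$. For the lower bound put $\vec v_\xi:=\nabla\xi+\nabla\Delta^{-1}(\partial_t\xi+\vec u\cdot\nabla\xi)$; identity (ii) reads $\langle\nabla T\cdot\vec v_\xi\rangle_\tau=\langle f\xi\rangle_\tau+o(1)$ and (iii) gives $\langle|\vec v_\xi|^2\rangle_\tau=\langle|\nabla\xi|^2\rangle_\tau+\langle|\nabla\Delta^{-1}(\partial_t\xi+\vec u\cdot\nabla\xi)|^2\rangle_\tau+o(1)$, so $0\le\langle|\nabla T-\vec v_\xi|^2\rangle_\tau$ rearranges to $\langle|\nabla T|^2\rangle_\tau\ge\langle2f\xi-|\nabla\xi|^2-|\nabla\Delta^{-1}(\partial_t\xi+\vec u\cdot\nabla\xi)|^2\rangle_\tau+o(1)$, and $\limsup_\tau$ gives the left inequality in \eqref{eq:vp}. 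For the upper bound put $\vec q_\eta:=\nabla\eta+\nabla\Delta^{-1}(\partial_t\eta+\vec u\cdot\nabla\eta-f)$; combining (ii) with (i) in the form $\langle\nabla T\cdot\nabla\Delta^{-1}f\rangle_\tau=-\langle|\nabla T|^2\rangle_\tau+o(1)$ gives $\langle\nabla T\cdot\vec q_\eta\rangle_\tau=\langle f\eta\rangle_\tau+\langle|\nabla T|^2\rangle_\tau+o(1)$, while (iii) together with $\langle\nabla\eta\cdot\nabla\Delta^{-1}f\rangle_\tau=-\langle f\eta\rangle_\tau$ gives $\langle|\vec q_\eta|^2\rangle_\tau=\langle|\nabla\eta|^2\rangle_\tau+\langle|\nabla\Delta^{-1}(\partial_t\eta+\vec u\cdot\nabla\eta-f)|^2\rangle_\tau+2\langle f\eta\rangle_\tau+o(1)$; hence $0\le\langle|\nabla T-\vec q_\eta|^2\rangle_\tau$ rearranges to $\langle|\nabla T|^2\rangle_\tau\le\langle|\nabla\eta|^2+|\nabla\Delta^{-1}(\partial_t\eta+\vec u\cdot\nabla\eta-f)|^2\rangle_\tau+o(1)$, and $\limsup_\tau$ gives the right inequality in \eqref{eq:vp}. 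The algebra here is routine; the real work is the first paragraph --- making every integration by parts and vanishing limit rigorous at the assumed regularity, and in particular extracting the $L^\infty$ bound on $T(\cdot,t)$ from the $L^p$ hypothesis on $f$ with only $\vec u\in L^2$ --- and that is where I expect the main difficulty to lie.
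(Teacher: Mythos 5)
Your proposal is correct and follows essentially the same route as the paper: test functions as Lagrange multipliers for the weak form of \cref{eq:adv-diff_weak_forproof}, the same three identities (energy balance, the duality pairing through $\Delta^{-1}$, and the vanishing cross term), and the same $o_\tau(1)$ bookkeeping of time-boundary terms, with your explicit completion of the square via $\vec v_\xi$ and $\vec q_\eta$ being exactly the evaluated form of the paper's $\inf_\theta$/$\sup_\theta$ followed by its Euler--Lagrange equation. The only divergence is in the auxiliary regularity step that the paper outsources to references: you propose Moser iteration for the $L^\infty$ bound on $T$, whereas the paper invokes heat-kernel (Duhamel) estimates, which mesh more directly with the second hypothesis in \cref{eq:assumptions-on-f} --- in particular at the endpoint $p=d/2$, where a bare Moser iteration does not close.
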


\begin{remark}\label{rem:f-assumptions}
 The two assumptions on $f$ in \cref{eq:assumptions-on-f} play different roles. 
 The first one ensures that $\|T(\cdot,\tau)\|_{L^2(\Omega)} \ll \sqrt{\tau}$ as $\tau\to \infty$, so that the integration-by-parts identity   $\langle \abs{\nabla T}^2\rangle=\left\langle fT\right\rangle$ holds. The second one implies that $T(\cdot,\tau)\in L^\infty(\Omega)$ for a.e.\ $\tau>0$. This allows us to define the notion of weak solutions in the usual way, by testing the given equation against functions in $H^1(\Omega)$ and integrating by parts (following, e.g., \cite{EvansPDE}); it also ensures that $\int_\Omega \vec{u}\cdot\nabla T T\dVolume = 0$. As these remarks are more or less standard fare in parabolic regularity theory \cite{EvansPDE, Lieberman1996}, we shall not present their proofs, but instead point to the notes \cite{Kiselev2021} for an exposition that is readily adapted to our setting. In brief, the desired $L^2$-bound follows from Gronwall's inequality by testing the equation against $T$; the $L^\infty$-bound follows from known `heat kernel' bounds on the forward-time solution map of the homogeneous equation (bounding it from 
$L^p(\Omega)$ to $L^\infty(\Omega)$ by a multiple of $e^{-a\lambda_1\Delta t}(\Delta t)^{-d/2p}$ for any $a\in(0,1)$, across a time increment $\Delta t$). 
\end{remark}
\begin{remark}\label{rem:extra-integrability-velocity}One can do away with the second assumption on $f$ in \cref{eq:assumptions-on-f} by imposing enough integrability on $\vec{u}$ to the point that $\vec{u}\cdot\nabla \theta \in H^{-1}(\Omega)$ a.e.\ in time, for any $\theta \in H^1(\Omega)$. Since by the Sobolev embedding theorem \cite{EvansPDE} $L^{2d/(d+2)}(\Omega)$ is included in $H^{-1}(\Omega)$ when $d > 2$, it suffices to assume that $\vec{u}\in L^{d}(\Omega)$ a.e.\ in time. The $L^\infty$ assumption on the test functions in $\mathcal{A}$ can then be removed. These statements continue to hold in the borderline case $d=2$ due to `div-curl' character of the product $\vec{u}\cdot\nabla \theta$; see \cref{ss:BMO-bound} or \cite{Tobasco2022} for more details.
\end{remark}
\begin{proof}[Proof of \cref{thm:unsteady-VP}]


We apply the method of Lagrange multipliers, with the advection-diffusion equation  \cref{eq:adv-diff_weak_forproof} as the constraint and the test functions $\eta$ and $\xi$ as multipliers.  
Let  $\mean{\cdot}_\tau = \fint_0^\tau \fint_\Omega \cdot \, \dVolume {\rm d}t$ be the average of a quantity over $\Omega\times (0,\tau)$. 
To prove the lower bound in \cref{eq:vp}, start with the weak form of the advection-diffusion equation, which states that
\begin{equation}\label{eq:adv-diff-weakform}
    \mean{f \xi - \nabla T \cdot  \nabla \xi - \partial_t T \xi - \vec{u}\cdot\nabla T \xi}_\tau = 0
\end{equation}
for any  $\xi \in \mathcal{A}$ and  $\tau>0$.
Thus,  
    \begin{align}
        \mean{ \abs{\nabla T }^{2}}_{\tau} 
        &=
        \left\langle \abs{\nabla T}^{2} + 2\left(f\xi - \nabla T \cdot \nabla \xi- \partial_{t}T \xi -\vec{u}\cdot\nabla T \xi \right)
        \right\rangle _{\tau}
        \nonumber\\
        &=
        \left\langle
        \abs{\nabla T}^{2}+2 f\xi
        -2\nabla T \cdot \nabla \xi
        +2 T \left(\partial_{t}\xi + \vec{u}\cdot\nabla\xi \right)\right\rangle_{\tau}
        -\left.\frac{2}{\tau}\fint_{\Omega} T\xi \dVolume\right\vert_{t=0}^{t=\tau}
        \nonumber\\
        &\geq
        \inf_{\theta}\,
        \left\langle
        \abs{\nabla \theta}^{2} +2f\xi
        -2\nabla \theta \cdot \nabla\xi
        +2 \theta \left(\partial_{t}\xi + \vec{u}\cdot\nabla\xi \right)\right\rangle_{\tau}
        -
        o_{\tau}(1) 
        \label{e:opt:lb-derivation}
    \end{align}
    where $o_\tau(1)$ denotes a term that goes to zero as $\tau \to \infty$.
To see this last step, note that $\|T(\cdot,\tau)\|_{L^2(\Omega)} \ll \sqrt{\tau}$ as explained in \cref{rem:f-assumptions}. Also,
    $\|\xi(\cdot,\tau)\|_{L^2(\Omega)} \ll \sqrt{\tau}$ by the definition of the admissible set $\mathcal{A}$ in \cref{e:test-function-space}. Hence,
    \begin{equation*}
        \abs{  \frac{1}{\tau}
        \left.
        \fint_{\Omega}
        T\xi \dVolume
        \right\vert_{t=0}^{t=\tau}}
        \leq
        \frac{1}{\tau \abs{\Omega}}
        \|T(\cdot,\tau)\|_{L^2(\Omega)}
        \|\xi(\cdot,\tau)\|_{L^2(\Omega)}
        + \abs{\frac{1}{\tau}
        \fint_{\Omega}
        T(\vec{x},0)\xi(\vec{x},0)
        \dVolume } \to 0
        \quad \text{as } \tau \to \infty.
    \end{equation*}


To evaluate the infimum in \cref{e:opt:lb-derivation}, we make use of its Euler--Lagrange equation
\begin{equation*}
    \begin{cases}
        \Delta \theta = \Delta \xi + \partial_t \xi + \vec{u}\cdot \nabla \xi &\text{on }\Omega\\
        \hat{\vec{n}} \cdot \nabla \theta = 0 &\text{at }\partial\Omega
    \end{cases}
\end{equation*}
which gives the optimal $\theta$ at each fixed time. Testing against $\theta$ and integrating by parts shows that
\[
\mean{
    \abs{\nabla \theta}^{2}}_{\tau} = \mean{\nabla \theta \cdot \nabla\xi
    - \theta \left(\partial_{t}\xi + \vec{u}\cdot\nabla\xi \right)}_{\tau}.
\]
Therefore, by \cref{e:opt:lb-derivation},
    \begin{align}
        \mean{ \abs{\nabla T }^{2}}_{\tau}
        &\geq
        \left\langle
        2f\xi  -
        \abs{
            \nabla\Delta^{-1}\left( \Delta \xi +
            \partial_{t}\xi+\vec{u}\cdot\nabla\xi\right)
            }^{2}
        \right\rangle_{\tau}
        -o_\tau(1) \nonumber
        \\
        &=
        \left\langle
        2f \xi  -
        \abs{\nabla \xi}^2 -
        \abs{\nabla\Delta^{-1}
            (\partial_{t}\xi+\vec{u}\cdot\nabla\xi)
            }^{2}
        \right\rangle_{\tau}
        -o_\tau(1).\label{e:opt:lb-derivation-step2}
    \end{align}
Note the cross term vanishes since
    \begin{equation*}
        2\left\langle
        \nabla\xi \cdot
        \nabla\Delta^{-1}
        \left(
        \partial_{t}\xi+\vec{u}\cdot\nabla\xi \right)
        \right\rangle_{\tau}
        =
        -2\left\langle
        \xi \left(
        \partial_{t}\xi+\vec{u}\cdot\nabla\xi\right)
        \right\rangle _{\tau}
        =
        -\frac{1}{\tau} \int_0^\tau \frac{\rm d}{{\rm d}t} \|\xi\|_{L^2(\Omega)}^2 \, {\rm d}t
        =
        o_\tau(1)
    \end{equation*}
by the growth conditions on $\xi$ and 
the no-penetration conditions for $\vec{u}$. Taking $\tau \to \infty$ in \cref{e:opt:lb-derivation-step2} yields the lower bound.

The upper bound  in \cref{eq:vp} is proved by a similar argument. The key is to find a version of
$\gradT$ that lends itself to maximization, rather than minimization.  Multiplying the advection-diffusion equation \cref{eq:adv-diff_weak_forproof} by $T$ and integrating by parts shows that
\begin{equation*}
    \langle \abs{\nabla T}^2\rangle_\tau
    =\left\langle fT\right\rangle_\tau + o_\tau(1),
\end{equation*}
as in the first part of \cref{rem:f-assumptions}. This allows us to rewrite
\[
\gradT_\tau = \smallmean{2fT- \abs{\nabla T}^2}_\tau + o_\tau(1)
\]
and mimic the previous argument, but with an upper bound. Specifically, using a Lagrange multiplier $\eta \in \mathcal{A}$ in the weak form \cref{eq:adv-diff-weakform} of the advection--diffusion equation, we can write that
    \begin{align*}
        \left\langle 2fT - \abs{\nabla T}^2 \right\rangle _{\tau} 
        &=
        \left\langle
        2fT-\abs{\nabla T}^2
        -2\left(f\eta
        - \nabla T \cdot \nabla \eta
        -\partial_{t}T \eta - \vec{u}\cdot\nabla T \eta
            \right)\right\rangle_{\tau}
        \\
        &\leq \sup_{\theta}\,
        \left\langle
        2f\theta-\abs{\nabla \theta}^2
        -2f\eta
        +2 \nabla \theta \cdot \nabla \eta
        -2\theta \left(\partial_{t} \eta + \vec{u}\cdot\nabla \eta
            \right)\right\rangle_{\tau}
        +o_{\tau}(1)
        \\
        &=
        \left\langle
        -2f\eta
        +\abs{
        \nabla\Delta^{-1}\left(\Delta \eta +
        \partial_{t}\eta+\vec{u}\cdot\nabla\eta-f\right)}^{2}\right\rangle _{\tau}+o_{\tau}(1)
        \\
        &=
        \left\langle \abs{\nabla\eta}^{2}
        +
        \abs{\nabla\Delta^{-1}(\partial_{t}\eta+\vec{u}\cdot\nabla\eta-f)}^{2}
        \right\rangle_{\tau}
        +o_{\tau}(1).
    \end{align*}
To pass between the first and second lines, integrate by parts to find only $o_\tau(1)$ contributions. Then, optimize over $\theta$ like before. Taking $\tau \to \infty$ gives the upper bound.
\end{proof}

\subsection{Sharpness in the steady case}
\label{ss:s2-steady-bounds}
\noindent
The variational bounds in \cref{thm:unsteady-VP} apply to both steady and unsteady $\vec{u}$ and $f$. In the steady case, these bounds cannot be improved. We adapt the argument from~\cite{Tobasco2017,Doering2019}.

\begin{corollary}\label{th:steady-VP}
    Let $\vec{u}(\vec{x})$, $f(\vec{x})$ and $T(\vec{x},t)$ be as in \cref{thm:unsteady-VP} (in particular let $f\in L^{p}(\Omega)$ for some $p>d/2$). Then,
    \[
        \max_{\xi \in H^1(\Omega)\cap L^\infty(\Omega)}
        \,\fint_{\Omega}
            2f\xi
            -\abs{\nabla\xi}^2
            -|\nabla\Delta^{-1}(\vec{u}\cdot\nabla\xi)|^{2}
            \,\dVolume
        =
        \left\langle \abs{\nabla T}^2\right\rangle
        =
        \min_{\eta \in H^1(\Omega)\cap L^\infty(\Omega)}
        \,\fint_{\Omega}
            \abs{\nabla\eta}^2
            +|\nabla\Delta^{-1}(\vec{u}\cdot\nabla\eta-f)|^{2}
            \,\dVolume.
    \]
\end{corollary}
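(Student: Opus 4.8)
The plan is to combine the two-sided bounds already established in \cref{thm:unsteady-VP} with an explicit choice of optimal steady test functions. Any time-independent $\xi(\vec x),\eta(\vec x)\in H^1(\Omega)\cap L^\infty(\Omega)$ belongs to the admissible class $\mathcal A$ of \cref{e:test-function-space} (its time derivative vanishes and the growth condition is trivial), and for such test functions \cref{eq:vp} reads
\[
\fint_\Omega 2f\xi-\abs{\nabla\xi}^2-\abs{\nabla\Delta^{-1}(\vec u\cdot\nabla\xi)}^2\,\dVolume \;\le\; \langle\abs{\nabla T}^2\rangle \;\le\; \fint_\Omega\abs{\nabla\eta}^2+\abs{\nabla\Delta^{-1}(\vec u\cdot\nabla\eta-f)}^2\,\dVolume .
\]
So it suffices to exhibit one $\eta$ achieving equality on the right and one $\xi$ achieving equality on the left. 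Before that I would record two preliminary facts. First, the steady problem $\vec u\cdot\nabla T_\infty=\Delta T_\infty+f$ with $\hat{\vec n}\cdot\nabla T_\infty=0$ and $\fint_\Omega T_\infty\,\dVolume=0$ has a unique solution $T_\infty\in H^1(\Omega)\cap L^\infty(\Omega)$: existence and regularity rest on the same elliptic/heat-kernel estimates invoked in \cref{rem:f-assumptions} (using $f\in L^p$, $p>d/2$, and the integrability of $\vec u$ from \cref{rem:extra-integrability-velocity}), while uniqueness and the convergence $T(\cdot,t)\to T_\infty$ in $L^2(\Omega)$ follow by testing the equation for $w=T-T_\infty$ against $w$ and using $\lambda_1\|w\|_{L^2}^2\le\|\nabla w\|_{L^2}^2$. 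Second, the same energy identity gives $\frac1\tau\int_0^\tau\|\nabla w\|_{L^2}^2\,{\rm d}t\to0$, whence $\langle\abs{\nabla T}^2\rangle=\fint_\Omega\abs{\nabla T_\infty}^2\,\dVolume$ by Cauchy--Schwarz.

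For the upper bound, the Euler--Lagrange equation of the right-hand functional suggests seeking $\eta$ and $\phi:=\Delta^{-1}(\vec u\cdot\nabla\eta-f)$ with $\Delta\eta=\vec u\cdot\nabla\phi$. I would produce such a pair by letting $\phi$ be the unique mean-zero solution of $\Delta\phi+\vec u\cdot\nabla\phi=\Delta T_\infty$ with Neumann conditions — solvable because the right side is mean-zero and $\Delta+\vec u\cdot\nabla$ has trivial kernel on mean-zero functions (if $\Delta\phi+\vec u\cdot\nabla\phi=0$ then $\fint_\Omega\abs{\nabla\phi}^2=\fint_\Omega\phi\,\vec u\cdot\nabla\phi=0$) — and setting $\eta^*:=T_\infty-\phi$. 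Using $\Delta T_\infty=\vec u\cdot\nabla T_\infty-f$ one checks directly that $\Delta\phi=\vec u\cdot\nabla\eta^*-f$, so $\Delta^{-1}(\vec u\cdot\nabla\eta^*-f)=\phi$, and that $\Delta\eta^*=\vec u\cdot\nabla\phi$. Hence $\fint_\Omega\nabla\eta^*\cdot\nabla\phi\,\dVolume=-\fint_\Omega\phi\,\vec u\cdot\nabla\phi\,\dVolume=0$ by incompressibility and no penetration, and since $\eta^*+\phi=T_\infty$,
\[
\fint_\Omega\abs{\nabla\eta^*}^2+\abs{\nabla\Delta^{-1}(\vec u\cdot\nabla\eta^*-f)}^2\,\dVolume=\fint_\Omega\abs{\nabla\eta^*}^2+\abs{\nabla\phi}^2\,\dVolume=\fint_\Omega\abs{\nabla T_\infty}^2-2\nabla\eta^*\cdot\nabla\phi\,\dVolume=\fint_\Omega\abs{\nabla T_\infty}^2\,\dVolume .
\]

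The lower bound is entirely analogous: let $\psi$ be the unique mean-zero solution of $\Delta\psi+\vec u\cdot\nabla\psi=\Delta T_\infty+f$ and put $\xi^*:=T_\infty-\psi$; then $\Delta\psi=\vec u\cdot\nabla\xi^*$ and $\Delta\xi^*=\vec u\cdot\nabla\psi-f$, so $\fint_\Omega\nabla\xi^*\cdot\nabla\psi\,\dVolume=\fint_\Omega f\psi\,\dVolume$, and expanding $\xi^*+\psi=T_\infty$ together with the steady identity $\fint_\Omega fT_\infty\,\dVolume=\fint_\Omega\abs{\nabla T_\infty}^2\,\dVolume$ yields $\fint_\Omega 2f\xi^*-\abs{\nabla\xi^*}^2-\abs{\nabla\Delta^{-1}(\vec u\cdot\nabla\xi^*)}^2\,\dVolume=2\fint_\Omega fT_\infty\,\dVolume-\fint_\Omega\abs{\nabla T_\infty}^2\,\dVolume=\fint_\Omega\abs{\nabla T_\infty}^2\,\dVolume$. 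Combining these two equalities with the displayed two-sided bound and with $\langle\abs{\nabla T}^2\rangle=\fint_\Omega\abs{\nabla T_\infty}^2\,\dVolume$ shows that the supremum is attained at $\xi^*$, the infimum at $\eta^*$, and both equal $\langle\abs{\nabla T}^2\rangle$.

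I expect the only real obstacle to be regularity: to legitimately use $\xi^*,\eta^*$ as admissible test functions one needs $T_\infty,\phi,\psi\in H^1(\Omega)\cap L^\infty(\Omega)$, which requires bootstrapping elliptic estimates for $\Delta T_\infty=\vec u\cdot\nabla T_\infty-f$ and for the two auxiliary equations against the integrability of $f$ and $\vec u$. As in \cref{rem:f-assumptions}, I would handle this by appealing to standard elliptic regularity and the heat-kernel bounds already cited, rather than reproving them here.
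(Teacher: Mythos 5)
Your proposal is correct and follows essentially the same route as the paper: restrict \cref{thm:unsteady-VP} to steady test functions, exhibit explicit optimizers built from the steady temperature and a second field solving an adjoint-type linear problem, and verify optimality through the vanishing of the cross terms $\fint_\Omega \nabla\eta^*\cdot\nabla\phi\,\dVolume$ and $\fint_\Omega \xi^*\,\vec{u}\cdot\nabla\xi^*\,\dVolume$. Your auxiliary functions coincide with the paper's (one checks $\phi=\xi^*=(T+T_{\rm adj})/2$ and $\psi=\eta^*=(T-T_{\rm adj})/2$, where $T_{\rm adj}$ solves the adjoint equation $-\vec{u}\cdot\nabla T_{\rm adj}=\Delta T_{\rm adj}+f$), so the only difference is that the paper reaches them by diagonalizing the coupled Euler--Lagrange system via the change of variables $T=\xi+\eta$, $T_{\rm adj}=\xi-\eta$, while you solve two scalar elliptic problems directly.
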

\begin{remark}
The integrability assumption on $f$ derives from the second part of~\cref{eq:assumptions-on-f}, which guarantees for steady $T(\vec{x})$ that $\|T\|_{L^\infty(\Omega)}\lesssim_{\Omega,d,p}\|f\|_{L^p(\Omega)}<\infty$ \cite{Kiselev2021}. The first part of~\cref{eq:assumptions-on-f} is redundant by Sobolev embedding \cite{EvansPDE}. 
\end{remark}
\begin{remark} \label{rem:reduced-regularity} Following up from \cref{rem:extra-integrability-velocity}, if $\vec{u}\in L^d(\Omega)$ then the result holds for $f\in H^{-1}(\Omega)$ and with test functions $\xi,\eta \in H^1(\Omega)$.
\end{remark}
\begin{proof}
    Optimizing the upper and lower bounds in \cref{thm:unsteady-VP} over all steady fields $\xi,\eta \in \mathcal{A}$ gives that
    \begin{equation}\label{e:steady-var-princ-initial}
        \sup_{\xi\in H^1(\Omega)\cap L^\infty(\Omega)}
        \,\fint_{\Omega}
            2f\xi
            -\abs{\nabla\xi}^2
            -|\nabla\Delta^{-1}(\vec{u}\cdot\nabla\xi)|^{2}
            \,\dVolume
        \leq
        \left\langle \abs{\nabla T}^2\right\rangle
        \leq
        \inf_{\eta \in H^1(\Omega)\cap L^\infty(\Omega)}
        \,\fint_{\Omega}
            \abs{\nabla\eta}^2
            +|\nabla\Delta^{-1}(\vec{u}\cdot\nabla\eta-f)|^{2}
            \,\dVolume.
    \end{equation}
    There is no loss of generality in taking $\xi$ and $\eta$ to be mean-free. The resulting variational problems are respectively strictly concave and strictly convex, so that solving them is the same as solving their  Euler--Lagrange equations:
    %
    \begin{align*}
        \Delta\xi
        &=\vec{u}\cdot\nabla\Delta^{-1}\left(\vec{u}\cdot\nabla\xi\right)-f,\\
        \Delta\eta
        &=\vec{u}\cdot\nabla\Delta^{-1}\left(\vec{u}\cdot\nabla\eta-f\right)
    \end{align*}
    %
    with $\hat{\vec{n}}\cdot\nabla\xi = \hat{\vec{n}}\cdot\nabla\eta= 0$ at $\partial\Omega$.
    Equivalently, we must solve
    \begin{equation}\label{e:steady:el-alternative}
        \left\{
        \begin{aligned}
        \vec{u}\cdot\nabla\eta-f & =\Delta\xi\\
        \vec{u}\cdot\nabla\xi & =\Delta\eta
        \end{aligned}
        \right.
    \end{equation}
    with the same boundary conditions. The change of variables $T =  \xi + \eta$ and $T_{\rm adj} = \xi - \eta$ recovers the steady heat equation and its adjoint:
    \begin{align*}
        \vec{u} \cdot \nabla T &= \Delta T + f,\\
        -\vec{u} \cdot \nabla T_{\rm adj} &= \Delta T_{\rm adj} + f
    \end{align*}
    with $\hat{\vec{n}}\cdot\nabla T = \hat{\vec{n}}\cdot\nabla T_{\rm adj}= 0$ at $\partial\Omega$.
    These last equations define $T$ and $T_{\rm adj}$ and ensure their boundedness, due to the second part of our assumption~\cref{eq:assumptions-on-f} on $f$ and \cref{rem:f-assumptions}. Hence, $\xi := (T+T_{\rm adj})/2$ and $\eta := (T-T_{\rm adj})/2$ are admissible in \cref{e:steady-var-princ-initial}, and we can proceed to evaluate their bounds.

   First, note that
    \begin{equation}\label{e:gradT-steady}
        \left\langle \abs{\nabla T}^2\right\rangle =
        \fint_\Omega
        \abs{\nabla\xi}^2+\abs{\nabla\eta}^2
        \,\dVolume
    \end{equation}
    because testing the second equation in \cref{e:steady:el-alternative} against $\xi$ yields
    $\fint_\Omega{\nabla\eta\cdot\nabla\xi}\dVolume=
    -\fint_\Omega{\vec{u}\cdot\nabla\xi \xi}\dVolume=0$.
    Now, substitute $\xi = \Delta^{-1}(\vec{u}\cdot \nabla \eta - f)$ into the right-hand side of~\cref{e:gradT-steady} to obtain
    \begin{equation*}
        \mean{\abs{\nabla T}^2} =
        \fint_\Omega \abs{\nabla\eta}^2 + \abs{\nabla\Delta^{-1}(\vec{u}\cdot \nabla \eta - f) }^2
        \,\dVolume.
    \end{equation*}
    This verifies the optimality of $\eta$ and proves the second half of \cref{th:steady-VP}. To prove the first half, note the identity
\begin{equation}\label{eq:used-identity}
    \fint_\Omega{\abs{\nabla\eta}^2}\dVolume=\fint_\Omega{f\xi - \abs{\nabla\xi}^2}\dVolume
 \end{equation}
which derives from testing the first equation in \cref{e:steady:el-alternative} by $\xi$ and the second by $\eta$ and combining the results. Indeed,
    \begin{equation*}
        \fint_\Omega
        {f\xi - \abs{\nabla \xi}^2 - \abs{\nabla\eta}^2}
        \,\dVolume
        =
        \fint_\Omega
        f\xi -\abs{ \nabla \xi}^2 + (\vec{u}\cdot\nabla \xi) \eta
        \,\dVolume
        =
        \fint_\Omega
        f\xi -\abs{ \nabla \xi}^2 - (\vec{u}\cdot\nabla \eta) \xi
        \,\dVolume
        =0.
    \end{equation*}
Combining \cref{e:gradT-steady} and \cref{eq:used-identity} and using that  $\eta = \Delta^{-1}(\vec{u} \cdot \nabla \xi)$ we conclude that
    \begin{equation*}
        \mean{\abs{\nabla T}^2}
        =
        \fint_\Omega
        { 2\abs{\nabla\eta}^2 + \abs{\nabla\xi}^2  - \abs{\nabla\eta}^2 }
        \,\dVolume
        =
        \fint_\Omega
        { 2f\xi - \abs{\nabla\xi}^2 - \abs{\nabla\Delta^{-1} \vec{u}\cdot\nabla\xi}^2 }
        \,\dVolume
    \end{equation*}
   as required.%
 \end{proof}



\section{Bounds on energy-constrained flows}
\label{sec:univ-lower-bounds}
\noindent
The previous section achieved upper and lower bounds on $\gradT$ in terms of a pair of test functions, the choice of which was left up to the reader depending on the application. We now demonstrate how knowledge of the mean kinetic energy $\langle |\vec{u}|^2\rangle$ along with the structure of the source--sink distribution $f$ can be used to achieve the lower bound
\begin{equation}
    \label{eq:lb-generic}
    \langle |\nabla T|^2 \rangle \geq \frac{C_1}{C_2+ C_3 \langle |\vec{u}|^2 \rangle }.
\end{equation}
%
We base our approach on a well-known inequality of Coifman, Lions, Meyers and Semmes \cite{Coifman1990}, which we introduce in \cref{ss:BMO-bound} along with the requisite functional analysis involving Hardy and BMO spaces. This inequality explains how the advection term $\vec{u}\cdot\nabla T$ inherits additional regularity beyond a typical dot product from the fact that it involves divergence- and curl-free fields. Using it, we achieve \cref{eq:lb-generic} in \cref{ss:lower-bounds-BMO-hardy}.

\Cref{ss:lower-bounds-example} goes on to discuss a pair of examples where our methods establish the scaling law of $\min\, \gradT$ with respect to $\langle |\vec{u}|^2\rangle$ and certain features of $f$. In each example, we apply \cref{th:lb-BMO} with a suitable test function to deduce a lower bound. We then saturate the scaling behaviors of this bound by constructing nearly optimal velocity fields. Part of the puzzle is to understand when flowing is significantly better than not, and indeed this is reflected by a cross-over in the optimal scaling laws achieved in \cref{th:cellular-flows,th:pinching}.

\subsection{A brief introduction to \texorpdfstring{$\Hardy$}{H^1} and \texorpdfstring{$\BMO$}{BMO}}
\label{ss:BMO-bound}
\noindent
First, we introduce the functional analytic framework we use to prove our kinetic energy-based  bounds. We leave out most of the proofs, and point to the  references~\cite{Miyachi1990,Stein1993book,Chang1994,Chang2005} for full details.
Given a domain $\Omega\subset\mathbb{R}^d$, the Hardy space $\Hardy(\Omega)$ and space of bounded mean oscillation functions $\BMO(\Omega)$ are defined as follows.\footnote{In the notation of reference \cite{Chang2005}, we are defining $\Hardy_z(\Omega)$ and $\BMO_r(\Omega)$. We omit the subscripts to lighten the notation.} 
Starting with $\Hardy(\Omega)$, we fix a smooth and compactly supported function $\rho(\vec{x}) \geq 0$ with $\int_{\mathbb{R}^d} \rho\dVolume = 1$, and define the associated \emph{maximal function} operator by
\begin{equation}\label{e:max-function}
    M_\rho f(\vec{x}) = \sup_{\delta>0}\, \abs{\int_{\Omega}\frac{1}{\delta^{d}}\rho\left(\frac{\vec{x}-\vec{z}}{\delta}\right)f(\vec{z})\,d\vec{z} },\quad \vec{x}\in\mathbb{R}^d.
\end{equation}
This definition records the `worst-case averages' of a given function $f(\vec{x})$ against rescaled copies of the probability density $\rho$ (actually, it is the extension of $f$ by zero from $\Omega$ to $\mathbb{R}^d$ that is being averaged).
The \emph{Hardy space} $\smash{\Hardy(\Omega)}$ then consists of all $f\in L^1(\Omega)$ such that $M_\rho f\in L^1(\mathbb{R}^d)$, a condition that turns out to be independent of $\rho$. This is a Banach space under the norm
%
\begin{equation*}
    \|f\|_{\Hardy(\Omega)} = \int_{\mathbb{R}^d} M_\rho f(\vec{x})\dVolume
    \end{equation*}
which embeds continuously into $L^1(\Omega)$ per the inequality $\|\cdot\|_{L^1(\Omega)} \leq \|\cdot\|_{\Hardy(\Omega)}$ (a consequence of Lebesgue differentiation).
The reverse inequality fails, however, as an example based on approximating a Dirac mass shows. Indeed, let $\vec{x}_0\in \Omega$ and consider a sequence of functions $\{f_{\epsilon}\}$ that have $L^1$-norm equal to one, and are defined by taking $f_{\epsilon} = \epsilon^{-d}$ on the ball $B_{\epsilon}(\vec{x}_0)$ of radius $\epsilon>0$ centered at $\vec{x}_0$ and $f_\epsilon = 0$ otherwise. Taking $\delta(\vec{x}) \sim |\vec{x}-\vec{x}_0|\vee\epsilon$ in~\cref{e:max-function} yields the lower bound $M_\rho f_\epsilon(\vec{x}) \gtrsim \delta^{-d}(\vec{x})$, the $L^1$-norm of which diverges logarithmically as $\epsilon\to 0$. This calculation is at the heart of our pinching flow example in \cref{ss:pinching}.


Being a Banach space, $\Hardy(\Omega)$ has a dual. A famous result of Fefferman identifies $\Hardy(\Omega)^*$ with a function space introduced by John and Nirenberg \cite{John1961bmo} in connection with John's work on elasticity. The space of \emph{bounded mean oscillation} functions $\BMO(\Omega)$ consists of all functions $g(\vec{x})$ for which
%
\begin{equation*}
    \|g \|_{\BMO(\Omega)} = \sup_{Q\subset \Omega}\, \fint_Q \abs{g(\vec{x})-\fint_Q g} \dVolume <\infty
\end{equation*}
where $Q$ is a $d$-dimensional cube. Modulo constants, this is a norm under which $\BMO(\Omega)$ is a Banach space.
%
The duality between $\Hardy(\Omega)$ and $\BMO(\Omega)$ is realized by the inequality
\begin{equation}\label{eq:h1-bmo-duality-full-space}
    \abs{\int_\Omega f g \, \dVolume} \lesssim_{\Omega,d} \|f\|_{\Hardy(\Omega)} \|g\|_{\BMO(\Omega)}
\end{equation}
which holds at first for $f\in\Hardy(\Omega)$ and $g\in \BMO(\Omega)\cap L^\infty(\Omega)$, and then for all $g\in \BMO(\Omega)$ by continuous extension.
It follows directly from the definitions that $\|\cdot\|_{\BMO(\Omega)}\leq 2\|\cdot\|_{L^\infty(\Omega)}$ so that $L^\infty(\Omega)$ embeds continuously into $\BMO(\Omega)$. Again, the reverse direction fails: the function $\log(|\vec{x}-\vec{x}_0|)$ belongs to $\BMO(\Omega)$ (see \cite[Ch. IV]{Stein1993book}) but is not in $L^\infty(\Omega)$ if $\vec{x}_0\in\Omega$. This too shows up in our discussion of pinching flows. 

Finally, we recall the div-curl inequality of Coifman, Lions, Meyers and Semmes \cite{Coifman1990}: if $\vec{u}(\vec{x})$ and $\vec{v}(\vec{x})$ belong to $L^2(\mathbb{R}^d;\mathbb{R}^d)$ and are respectively divergence- and curl-free,    their inner product $\vec{u}\cdot\vec{v}$  belongs to $\Hardy(\mathbb{R}^d)$ and satisfies
\begin{equation}\label{eq:clms-full-space}
    \| \vec{u} \cdot \vec{v} \|_{\Hardy(\mathbb{R}^d)} \lesssim_d \|\vec{u}\|_{L^2(\mathbb{R}^d)}\|\vec{v}\|_{L^2(\mathbb{R}^d)}.
\end{equation}
The same result holds with a bounded Lipschitz domain $\Omega$ in place of $\mathbb{R}^d$ provided $\vec{u}$ satisfies no-penetration boundary conditions. The key points for deducing this from \cref{eq:clms-full-space} are that (i) with no-penetration conditions, the extension of $\vec{u}$ by $\vec{0}$ to $\mathbb{R}^d$ is  divergence-free, and (ii) one can find a curl-free extension of $\vec{v}$ to $\mathbb{R}^d$ whose $L^2$-norm is bounded by that of its restriction to $\Omega$ (apply the Sobolev extension theorem \cite{EvansPDE} to a potential $\varphi$ with $\vec{v}=\nabla \varphi$). We shall make repeated use of the resulting inequality, which states that
\begin{equation}\label{eq:clms-bdd-domain}
    \| \vec{u} \cdot \vec{v} \|_{\Hardy(\Omega)} \lesssim_{\Omega,d} \|\vec{u}\|_{L^2(\Omega)}\|\vec{v}\|_{L^2(\Omega)}
\end{equation}
if $\vec{u}$ is divergence-free with $\vec{u}\cdot\hat{\vec{n}}=0$ at $\partial\Omega$, and if $\vec{v}$ is curl-free.

\subsection{Bounding the heat transfer of energy-constrained flows}
\label{ss:lower-bounds-BMO-hardy}
\noindent
Combining the main result of \cref{ss:s2-unsteady-bounds} with the functional analysis recalled above, we bound  $\gradT$ from below in terms of the mean kinetic energy $\smallmean{|\vec{u}|^2}$. With an eye towards the examples of \cref{ss:lower-bounds-example}, we state this result for steady $f(\vec{x})$ while allowing $\vec{u}(\vec{x},t)$ and $T(\vec{x},t)$ to be unsteady (however, see the remark below).

\begin{corollary}\label{th:lb-BMO}
    Let $\vec{u}(\vec{x},t)$, $f(\vec{x})$ and $T(\vec{x},t)$ be as in  \cref{thm:unsteady-VP} (or as in \cref{rem:extra-integrability-velocity}).
    There is a constant $C>0$ depending only on $\Omega$ and $d$ such that
    \begin{equation}
        \label{e:lb:proof:general-xi}
        \left\langle |\nabla T|^{2}\right\rangle
        \geq
        \frac{ \left(\fint_\Omega {\xi}f \dVolume \right) ^{2}}{
        \fint_\Omega |\nabla{\xi}|^{2} \dVolume+
        C \|\xi\|_{\BMO(\Omega)}^2 \mean{| \vec{u} |^{2}}  }
    \end{equation}
for every non-constant $\xi\in H^1(\Omega)\cap L^\infty(\Omega)$ (or $H^1(\Omega)$, respectively).
\end{corollary}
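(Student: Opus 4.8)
The plan is to feed the chosen steady test function $\xi$ into the variational lower bound of \cref{thm:unsteady-VP}. Regarded as time-independent, $\xi$ is admissible in $\mathcal{A}$ (see \cref{e:test-function-space}) since $\partial_t\xi=0$ and $\tau^{-1/2}\|\xi\|_{L^2(\Omega)}\to0$, and because $f$ is steady the left inequality in \cref{eq:vp} collapses to
\[
\left\langle |\nabla T|^2\right\rangle \;\geq\; 2\fint_\Omega f\xi\,\dVolume \;-\; \fint_\Omega |\nabla\xi|^2\,\dVolume \;-\; \left\langle \abs{\nabla\Delta^{-1}(\vec{u}\cdot\nabla\xi)}^2 \right\rangle.
\]
Recalling from \cref{ss:notation} that $\|\nabla\Delta^{-1}g\|_{L^2(\Omega)}=\|g\|_{H^{-1}(\Omega)}$, the entire task reduces to controlling $\|\vec{u}(\cdot,t)\cdot\nabla\xi\|_{H^{-1}(\Omega)}$ by $\|\xi\|_{\BMO(\Omega)}$ and $\|\vec{u}(\cdot,t)\|_{L^2(\Omega)}$, uniformly in $t$.

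To get that estimate I would pair $\vec{u}\cdot\nabla\xi$ against an arbitrary mean-free $\phi\in H^1(\Omega)$ and integrate by parts, using $\nabla\cdot\vec{u}=0$ and $\vec{u}\cdot\hat{\vec{n}}=0$, to obtain $\int_\Omega(\vec{u}\cdot\nabla\xi)\phi\,\dVolume=-\int_\Omega(\vec{u}\cdot\nabla\phi)\xi\,\dVolume$. The point is that $\vec{u}\cdot\nabla\phi$ is a genuine div--curl product, so \cref{eq:clms-bdd-domain} gives $\|\vec{u}\cdot\nabla\phi\|_{\Hardy(\Omega)}\lesssim_{\Omega,d}\|\vec{u}\|_{L^2(\Omega)}\|\nabla\phi\|_{L^2(\Omega)}$, and then $\Hardy$--$\BMO$ duality \cref{eq:h1-bmo-duality-full-space} bounds $\abs{\int_\Omega(\vec{u}\cdot\nabla\phi)\xi\,\dVolume}$ by a constant times $\|\vec{u}\|_{L^2(\Omega)}\|\nabla\phi\|_{L^2(\Omega)}\|\xi\|_{\BMO(\Omega)}$. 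Taking the supremum over $\phi$ yields $\|\vec{u}\cdot\nabla\xi\|_{H^{-1}(\Omega)}\leq C\|\xi\|_{\BMO(\Omega)}\|\vec{u}\|_{L^2(\Omega)}$ with $C=C(\Omega,d)$; squaring, averaging in space-time, and using $\|\vec{u}(\cdot,t)\|_{L^2(\Omega)}^2=|\Omega|\fint_\Omega|\vec{u}|^2\dVolume$ gives $\langle\abs{\nabla\Delta^{-1}(\vec{u}\cdot\nabla\xi)}^2\rangle\leq C\,\|\xi\|_{\BMO(\Omega)}^2\,\smallmean{|\vec{u}|^2}$. Substituting back,
\[
\left\langle |\nabla T|^2\right\rangle \;\geq\; 2\fint_\Omega f\xi\,\dVolume \;-\; \fint_\Omega |\nabla\xi|^2\,\dVolume \;-\; C\,\|\xi\|_{\BMO(\Omega)}^2\,\smallmean{|\vec{u}|^2}.
\]

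To finish, I would note this inequality holds for every admissible test function, in particular for $s\xi$ with $s\in\mathbb{R}$: the right-hand side becomes $2s\fint_\Omega f\xi\,\dVolume - s^2(\fint_\Omega|\nabla\xi|^2\,\dVolume + C\|\xi\|_{\BMO(\Omega)}^2\smallmean{|\vec{u}|^2})$, and maximizing over the scalar $s$ produces exactly the fraction in \cref{e:lb:proof:general-xi}. I do not expect a real obstacle here — the substance is already in \cref{thm:unsteady-VP} and in the div--curl/duality toolkit of \cref{ss:BMO-bound}. The only points needing care are (i) confirming the steady $\xi$ is admissible and that the $o_\tau(1)$ terms of \cref{thm:unsteady-VP} are already accounted for there; (ii) invoking the bounded-domain CLMS inequality via the extension argument recalled in \cref{ss:BMO-bound}; and (iii) observing that $\int_\Omega(\vec{u}\cdot\nabla\phi)\xi\,\dVolume$ is unchanged by adding a constant to $\xi$ (since $\vec{u}\cdot\nabla\phi$ is a divergence), which is precisely why the $\BMO$ seminorm is the correct quantity. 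Under the alternative hypotheses of \cref{rem:extra-integrability-velocity} the same chain of steps goes through for $\xi\in H^1(\Omega)$, since $\vec{u}\cdot\nabla\phi\in\Hardy(\Omega)$ continues to hold when $\vec{u}\in L^d(\Omega)$.
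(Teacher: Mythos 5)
Your proposal is correct and follows essentially the same route as the paper: apply \cref{thm:unsteady-VP} with a steady test function, rewrite $\|\vec{u}\cdot\nabla\xi\|_{H^{-1}(\Omega)}$ by duality and integration by parts as a pairing of the div--curl product $\vec{u}\cdot\nabla\varphi$ against $\xi$, invoke \cref{eq:clms-bdd-domain} and $\Hardy$--$\BMO$ duality, and optimize over the scalar multiple of $\xi$. The only cosmetic difference is the order of operations (the paper rescales $\xi\mapsto\lambda\xi$ before estimating the nonlocal term, you do it after), which changes nothing.
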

\begin{remark}\label{rem:bmo-bound-unsteady-case}
The same bound holds for unsteady $f(\vec{x},t)$ with $\langle \xi f \rangle$ in place of $\fint_\Omega \xi f$, though if the time-average of $f$ vanishes identically then this is not a useful bound. To improve the result, one should use unsteady test functions $\xi(\vec{x},t)$ following \cref{thm:unsteady-VP}. This leads to a bound with an additional term $C \langle |\nabla \Delta^{-1}\partial_t\xi|^2\rangle$ in the denominator, the implications of which we leave to future work.
\end{remark}
\begin{proof}
Applying \cref{thm:unsteady-VP} with a steady test function $\xi(\vec{x})$ gives the lower bound
\[
\gradT \geq
         2\fint_\Omega f\xi\,\dVolume - \fint_\Omega \abs{\nabla\xi}^2\dVolume - \langle |\nabla\Delta^{-1} \vec{u}\cdot\nabla\xi|^2\rangle.
\]
Substituting $\lambda \xi$ for $\xi$ and optimizing $\lambda \in \mathbb{R}$, there follows
\begin{equation}\label{eq:step1-bound}
    \gradT \geq
    \frac{ \left ( \fint_\Omega f\xi \,\dVolume \right )^2}{\fint_\Omega \abs{\nabla\xi}^2\dVolume+\langle |\nabla\Delta^{-1} \vec{u}\cdot \nabla\xi )|^{2} \rangle }.
\end{equation}
Note the denominator is non-zero by our hypothesis on $\xi$.
We proceed to estimate $\smallmean{|\nabla\Delta^{-1}\vec{u}\cdot\nabla\xi) |^{2}}$. At almost every time,
    \begin{equation*}
        \int_\Omega |\nabla\Delta^{-1}\vec{u} \cdot \nabla \xi |^2
        \dVolume
        =  \max_{\varphi(\vec{x})}\,
            \left(
            \frac{ \int_{\Omega}\vec{u}\cdot\nabla\xi \,\varphi \, \dVolume}
            { \| \nabla \varphi \|_{L^2(\Omega)} }
            \right)^2
        =  \max_{\varphi(\vec{x})}\,
            \left(
            \frac{ \int_{\Omega}\vec{u}\cdot\nabla\varphi\,\xi \, \dVolume}
            { \| \nabla \varphi \|_{L^2(\Omega)} }
            \right)^2.
    \end{equation*}
By the duality of $\mathcal{H}^1$ and $\BMO$ in \cref{eq:h1-bmo-duality-full-space} and the div-curl inequality  \cref{eq:clms-bdd-domain},
    \begin{equation*}
        \abs{\int_{\Omega}\vec{u}\cdot\nabla\varphi \,\xi \,\dVolume}
        \lesssim_{\Omega,d}
        \|\vec{u} \cdot \nabla \varphi\|_{\Hardy(\Omega)} \|\xi\|_{\BMO(\Omega)}
        \lesssim_{\Omega,d}
        \|\vec{u}\|_{L^2(\Omega)}
        \|\nabla \varphi\|_{L^2(\Omega)}
        \|\xi\|_{\BMO(\Omega)}.
    \end{equation*}
    Combining these statements and averaging in time, there follows
   \[
        \langle |\nabla\Delta^{-1} \vec{u}\cdot \nabla\xi )|^{2} \rangle        \lesssim_{\Omega,d}
        \langle |\vec{u}|^2\rangle\cdot
        \|\xi\|_{\BMO(\Omega)}^2.
    \]
    Substituting into \cref{eq:step1-bound} yields the bound
    \[
        \left\langle |\nabla T|^{2}\right\rangle
        \geq
        \frac{ \left(\fint_\Omega {\xi}f \dVolume \right) ^{2}}{
        \fint_\Omega |\nabla{\xi}|^{2} \dVolume+
        C(\Omega,d)
        \|\xi\|_{\BMO(\Omega)}^2\langle |\vec{u}|^2\rangle }.
        \qedhere
    \]
\end{proof}

How should the test function $\xi(\vec{x})$ be chosen in this last result? The answer depends, of course, on the domain $\Omega$, the dimension $d$ and the structure of the source--sink function $f$. It also depends on the magnitude of $\langle |\vec{u}|^2\rangle$. On the one hand, for sufficiently small kinetic energies one expects to be able to `cross out' the second term in the denominator of \cref{e:lb:proof:general-xi}, and select $\xi$ through the maximization
 \begin{equation}\label{e:lb:xi-Hm1}
      \max_{\xi(\vec{x})}\, \frac{ \left(\int_\Omega {\xi}f \dVolume \right) ^{2} }
        {\int_\Omega |\nabla{\xi}|^{2} \dVolume  } = \|f\|_{H^{-1}(\Omega)}^2.
\end{equation}
This leads to the choice $\xi = \Delta^{-1}f$.
On the other hand, for large $\langle |\vec{u}|^2\rangle$ one is lead to the maximization
 \begin{equation}\label{e:lb:xi-Hardy}
      \max_{\xi(\vec{x})}\, \frac{ \left(\int_\Omega {\xi}f \dVolume \right) ^{2} }
        {\|\xi\|_{\BMO(\Omega)}^2 } \sim_{\Omega,d} \|f\|_{\Hardy(\Omega)}^2
\end{equation}
by the duality between $\Hardy(\Omega)$ and $\BMO(\Omega)$. Here the best choice of $\xi$ is less apparent, though one achieving this equivalence is always guaranteed to exist. 
(We guess that time-dependent $f$ could be handled similarly by a suitable smoothing in time of the choices in \cref{e:lb:xi-Hm1} and \cref{e:lb:xi-Hardy}, taking into account the additional term $\langle |\nabla \Delta^{-1}\partial_t \xi |^2 \rangle$ from \cref{rem:bmo-bound-unsteady-case}.)
Of course, once one makes a choice for $\xi$, it can be plugged back into \cref{e:lb:proof:general-xi} to achieve a lower bound with known constants at all values of $\langle |\vec{u}|^2\rangle$. We demonstrate this in examples below.


\subsection{Two examples}
\label{ss:lower-bounds-example}
\noindent We now apply our variational bounds to a pair of examples involving oscillatory or concentrated heating and cooling. In each example, we deduce the scaling law of $\min\, \gradT$ with respect to its parameters, along with velocity fields achieving the optimal scalings.  See \cref{ss:hf-rolls} for oscillatory heating and our accompanying cellular flows, and \cref{ss:pinching} for concentrated heating and our pinching flows.
\subsubsection{Sinusoidal heating and cellular flows}
\label{ss:hf-rolls}
\noindent
Our first example optimizes heat transfer between a periodic pattern of sources and sinks. Let $\Omega = (0,2\pi)^2$ and take
\begin{equation}\label{e:ex:rolls:force}
    f (\vec{x}) =
     \frac12\cos\left( \frac{2y}{\ell} \right)
    -\frac12\cos\left( \frac{2x}{\ell} \right).
\end{equation}
The parameter $\ell^{-1}\in\mathbb{N}$ sets the period of the pattern.

\begin{proposition}\label{th:cellular-flows}
    Under the above setup,
    \begin{equation*}
        \min_{\substack{\vec{u}(\vec{x},t) \\ \smallmean{|\vec{u}|^2} \leq \Pe^2 \\ \partial_t T + \vec{u}\cdot\nabla T = \Delta T + f}} \gradT \sim \min\left\{\ell^2,\frac{1}{\Pe^2} \right\}
    \end{equation*}
for all $\ell^{-1}\in\mathbb{N}$ and $\Pe\geq 0$. The alternatives are achieved by no flow ($\ell^2$) or by the cellular flow ($\Pe^{-2}$) depicted in \cref{fig:flows}a of the introduction.
\end{proposition}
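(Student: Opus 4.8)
The plan is to prove the two matching bounds separately: the lower bound $\gradT \gtrsim \min\{\ell^2,\Pe^{-2}\}$ comes from \cref{th:lb-BMO} with a judiciously chosen test function, while the upper bound comes from exhibiting two explicit competitor flows — the trivial flow and an exact cellular solution of the pure advection system \cref{e:intro:pure-advection} — and taking whichever gives the smaller dissipation.

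For the lower bound I would apply \cref{th:lb-BMO} with the steady test function $\xi = f$. Since $\ell^{-1}\in\mathbb{N}$ makes the cosine modes orthogonal over $\Omega=(0,2\pi)^2$, the three ingredients in \cref{e:lb:proof:general-xi} are immediate Fourier computations: $\fint_\Omega f^2\,\dVolume=\tfrac14$; $\fint_\Omega|\nabla f|^2\,\dVolume=-\fint_\Omega f\Delta f\,\dVolume=\tfrac{4}{\ell^2}\fint_\Omega f^2\,\dVolume=\tfrac{1}{\ell^2}$; and — the crucial point — $\|f\|_{\BMO(\Omega)}\leq 2\|f\|_{L^\infty(\Omega)}\leq 2$, a bound that does \emph{not} degrade as $\ell\to 0$. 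Feeding these into \cref{e:lb:proof:general-xi} together with the energy constraint $\mean{|\vec{u}|^2}\leq\Pe^2$ (and using that the denominator there is increasing in $\mean{|\vec{u}|^2}$) gives
\[
\gradT \;\geq\; \frac{1/16}{\,\ell^{-2}+4C\Pe^2\,},
\]
and the elementary inequality $(a+b)^{-1}\geq\tfrac12(a\vee b)^{-1}=\tfrac12(a^{-1}\wedge b^{-1})$ turns the sum in the denominator into the claimed minimum, with prefactor depending only on the constant $C=C(\Omega,d)$ of \cref{th:lb-BMO}, hence on nothing else.

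For the upper bound I would take the better of two constructions. The no-flow choice $\vec{u}\equiv 0$ has steady temperature $T_\infty=-\Delta^{-1}f=\tfrac{\ell^2}{4}f$; equivalently, applying the upper half of \cref{thm:unsteady-VP} with $\eta\equiv 0$ gives $\gradT\leq\|f\|_{H^{-1}(\Omega)}^2=\fint_\Omega|\nabla\Delta^{-1}f|^2\,\dVolume=\tfrac{\ell^2}{16}$. For the other branch I would use the cellular flow of \cref{fig:flows}a: set $\psi_0(\vec{x})=\ell\sin(x/\ell)\sin(y/\ell)$, $\vec{u}_0=\nabla^\perp\psi_0$, and $T_0(\vec{x})=\ell\cos(x/\ell)\cos(y/\ell)$. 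A one-line product-to-sum identity gives $\vec{u}_0\cdot\nabla T_0=f$, so $(\vec{u}_0,T_0)$ solves \cref{e:intro:pure-advection}: indeed $\nabla\cdot\vec{u}_0=0$ automatically, and $\psi_0$ vanishes on $\partial\Omega$ — again because $\ell^{-1}\in\mathbb{N}$, so that $\sin(x/\ell)$ and $\sin(y/\ell)$ vanish at $0$ and $2\pi$ — which forces $\vec{u}_0\cdot\hat{\vec{n}}=0$. One computes $\fint_\Omega|\vec{u}_0|^2\,\dVolume=\fint_\Omega|\nabla T_0|^2\,\dVolume=\tfrac12$. Rescaling, set $\vec{u}=\sqrt2\,\Pe\,\vec{u}_0$ so that $\mean{|\vec{u}|^2}=\Pe^2$, and apply the upper bound of \cref{thm:unsteady-VP} with the steady test function $\eta=(\sqrt2\,\Pe)^{-1}T_0$: then $\vec{u}\cdot\nabla\eta-f\equiv 0$, killing the $H^{-1}$ term, and $\gradT\leq\fint_\Omega|\nabla\eta|^2\,\dVolume=\tfrac{1}{4\Pe^2}$. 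Taking the minimum over the two competitors gives $\min\gradT\leq\min\{\tfrac{\ell^2}{16},\tfrac{1}{4\Pe^2}\}\lesssim\min\{\ell^2,\Pe^{-2}\}$, matching the lower bound (and the case $\Pe=0$ is covered by the first competitor alone).

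I do not expect a genuine analytic obstacle here, since both directions reduce to explicit Fourier computations and routine applications of results already in hand. The two points needing care are: on the upper-bound side, checking that the constructed cellular flow is truly admissible on the square — in particular that its stream function is constant on $\partial\Omega$, which is exactly where the integrality hypothesis $\ell^{-1}\in\mathbb{N}$ enters; and on the lower-bound side, recognizing that it is the $\BMO$ norm — not an $L^2$- or $\dot H^1$-type norm, either of which would blow up like $\ell^{-1}$ — that keeps the constant uniform in the oscillation scale. It is worth noting that the cellular construction in fact delivers $\gradT\leq\tfrac{1}{4\Pe^2}$ for \emph{every} $\Pe>0$, so the crossover at $\Pe\sim\ell^{-1}$ in the statement arises entirely from the competition with the conductive (no-flow) state.
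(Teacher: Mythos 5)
Your proposal is correct and follows essentially the same route as the paper: the lower bound via \cref{th:lb-BMO} with $\xi=f$ (the $\BMO$-norm being exactly what keeps the constant uniform in $\ell$), and the upper bound by comparing the conductive state against the cellular flow $\vec{u}_0=\nabla^\perp\bigl(\ell\sin(x/\ell)\sin(y/\ell)\bigr)$ that solves the pure advection equation, with the same rescaling trick. The only slip is a sign: with the paper's convention $\nabla^\perp\psi=(\partial_y\psi,-\partial_x\psi)$ one finds $\vec{u}_0\cdot\nabla T_0=-f$ for your choice $T_0=+\ell\cos(x/\ell)\cos(y/\ell)$, so you should take $T_0=-\ell\cos(x/\ell)\cos(y/\ell)$ as the paper does; nothing else in the argument changes.
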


\begin{proof}[Proof of the lower bound]

We begin with the general lower bound
\begin{equation}\label{e:ex:rolls:lb-inf-norm}
    \left\langle |\nabla T|^{2}\right\rangle
    \geq
    \frac{ \left(\fint_\Omega {\xi}f \dVolume \right) ^{2}}{
    \fint_\Omega |\nabla{\xi}|^{2} \dVolume+
    C(\Omega) \|\xi\|_{\BMO(\Omega)}^2  \Pe^2  }
\end{equation}
from \cref{th:lb-BMO}.
The present $f$ belongs to $L^\infty(\Omega)$ and is such that all of its $L^p$-norms are comparable. In particular, $\|f\|_{L^1(\Omega)} \sim \|f\|_{L^\infty(\Omega)}\sim 1$ for all $\ell$. Also, $\|f\|_{\Hardy(\Omega)} \sim \|f\|_{\BMO(\Omega)}\sim 1$ and so there exist many good choices of $\xi$.

Take, for example, $\xi = f$. Then
\[
\int_\Omega \xi f\dVolume = \int_\Omega f^2 \dVolume = \int_\Omega \abs{\frac12\cos\left( \frac{2y}{\ell} \right)
    -\frac12\cos\left( \frac{2x}{\ell} \right)}^2\dVolume \sim 1
\]
while
\[
\|f\|_{\BMO(\Omega)}\leq 2 \|f\|_{L^\infty(\Omega)}\leq 2.
\]
Also,
\[
\int_\Omega |\nabla \xi|^2 = \int_\Omega |\nabla f|^2 = \int_\Omega \abs{-\frac{1}{\ell}\sin\left( \frac{2y}{\ell} \right)\hat{\vec{e}}_y
    -\frac{1}{\ell}\sin\left( \frac{2x}{\ell} \right)\hat{\vec{e}}_x}^2\dVolume \sim \frac{1}{\ell^2}.
\]
Combining these estimates into \cref{e:ex:rolls:lb-inf-norm} yields the lower bound
    \begin{equation*}
        \left\langle |\nabla T|^{2}\right\rangle
        \gtrsim
        \frac{1}{
        \ell^{-2}+
         \Pe^2  }
         \gtrsim \min\left\{ \ell^2, \frac{1}{\Pe^2} \right\}.
    \end{equation*}

\paragraph{Proof of the upper bound} We seek a steady velocity $\vec{u}(\vec{x})$ whose thermal dissipation is similar to the lower bound. To guide the search, consider the upper bound
 \[
    \left\langle \abs{\nabla T}^2\right\rangle
        \leq
            \fint_\Omega \abs{\nabla\eta}^2\dVolume
            +\fint_\Omega |\nabla\Delta^{-1}(\vec{u}\cdot\nabla\eta-f)|^{2}
            \dVolume    %
\]
from \cref{th:steady-VP}, which holds in the present two-dimensional case for all $\eta\in H^1(\Omega)$.
Making the change of variables

\begin{equation*}
    \vec{u}=\frac{\Pe}{\sqrt{\fint_{\Omega}|\tilde{\vec{u}}|^{2}\dVolume}}\  \tilde{\vec{u}}\quad\text{and}\quad\eta=\frac{\sqrt{\fint_{\Omega}|\tilde{\vec{u}}|^{2}\dVolume}}{\Pe}\ \tilde{\eta}
\end{equation*}
and dropping the tildes yields the estimate
\begin{equation}
    \left\langle |\nabla T|^{2}\right\rangle \leq\frac{1}{\Pe^{2}}
    \fint_\Omega |\vec{u}|^{2} \dVolume
     \fint_\Omega |\nabla \eta|^{2} \dVolume
    +\fint_\Omega \abs{\nabla\Delta^{-1}(\vec{u}\cdot\nabla\eta-f)}^{2}\dVolume
    \label{eq:upper-bound-Pe}
\end{equation}
for all $\vec{u}$ and $\eta$. This reformulation simplifies the algebra, as it allows us to neglect the kinetic energy constraint. Of course, it is actually the unscaled velocity with kinetic energy equal to $\Pe$ whose thermal dissipation we are estimating.

There are two alternatives to consider, depending on whether we should take $\vec{u}=\vec{0}$ or not. In the case with no flow, the choice of $\eta$ is immaterial and
\begin{equation*}
        \gradT
        \leq \fint_\Omega \abs{\nabla \Delta^{-1} f}^2 \dVolume
        = \fint_\Omega \abs{
        -\frac{\ell}{4}\sin\left(\frac{2x}{\ell}\right)\hat{\vec{e}}_{x}
        +\frac{\ell}{4}\sin\left(\frac{2y}{\ell}\right)\hat{\vec{e}}_{y}
        }^2 \dVolume
        \sim \ell^2
    \end{equation*}
with $\hat{\vec{e}}_x$ and $\hat{\vec{e}}_y$ being the unit vectors along the $x$- and $y$-coordinates.
On the other hand, for the particular $f$ in the example we can easily construct an admissible pair $(\vec{u},\eta)$ satisfying the pure advection equation
\[
\vec{u}\cdot\nabla \eta = f.
\]
Simply take $\vec{u}=\nabla^\perp \psi = (\partial_y \psi, -\partial_x\psi)$ with the stream function
    \begin{equation*}
       \psi(\vec{x}) =l\sin\left(\frac{x}{l}\right)\sin\left(\frac{y}{l}\right)
    \end{equation*}
    and use the test function
    \begin{equation*}
        \eta(\vec{x})  =-l\cos\left(\frac{x}{l}\right)\cos\left(\frac{y}{l}\right).
    \end{equation*}
In fact, the definition of $f$ in \cref{e:ex:rolls:force} was made precisely with these choices in mind. The second term in \cref{eq:upper-bound-Pe} now vanishes, so that
\[
\left\langle |\nabla T|^{2}\right\rangle \leq
\frac{1}{\Pe^{2}}    \fint_\Omega |\vec{u}|^{2} \dVolume      \fint_\Omega |\nabla \eta|^{2} \dVolume
\lesssim \frac{1}{\Pe^2}.
\]
Since we are always free to use either velocity field, the minimum thermal dissipation is bounded according as
\[
\min\,\left\langle |\nabla T|^{2}\right\rangle \lesssim \min\left\{ \ell^2, \frac{1}{\Pe^2} \right\}.
\]
The proof is complete.
\end{proof}

\subsubsection{Concentrated heating and pinching flows}
\label{ss:pinching}
\noindent
Next we consider source--sink profiles of the general form
\begin{equation*}
    f(\vec{x}) = f_+(\vec{x}) - f_-(\vec{x})
\end{equation*}
where $f_\pm$ are non-negative and supported in disjoint balls $B_\epsilon(\vec{x}_\pm)$ centered at $\vec{x}_\pm$ with radii $\epsilon>0$. Fixing units, we take
\begin{equation*}
    \int_{B_\epsilon(\vec{x}_\pm)} f_\pm(\vec{x}) \dVolume =
    1
\end{equation*}
and $\vec{x}_\pm = (0,\pm 1/2)$. We also suppose that
\begin{equation*}
    \|f_\pm\|_{L^\infty(B_\epsilon(\vec{x}_\pm))} \lesssim \frac{1}{\epsilon^2}
    \qquad \text{and} \qquad
    \|\nabla f_\pm\|_{L^\infty(B_\epsilon(\vec{x}_\pm))} \lesssim \frac{1}{\epsilon^3}
\end{equation*}
and impose the `up-down' symmetry condition
\begin{equation}\label{e:ex:pinching:symmetry}
    f_+(x,y) = f_-(x,-y)
\end{equation}
saying that the heat added by $f_+$ at $(x,y)$ matches the heat taken away by $f_-$ at $(x,-y)$.
A source--sink distribution satisfying these conditions can be constructed by smoothing a point source and point sink across a scale $\sim\epsilon$; there are of course many other possibilities.  Regarding the domain, we assume for simplicity that it is the square $\Omega=(-1,1)^2$.

\begin{proposition}\label{th:pinching}
   Under the above setup,
   \begin{equation*}
        \min_{\substack{\vec{u}(\vec{x},t) \\ \smallmean{|\vec{u}|^2} \leq \Pe^2 \\ \partial_t T + \vec{u}\cdot\nabla T = \Delta T + f}} \gradT \sim \min
        \left\{
            \log\frac1\epsilon,\,
            \left(\log\frac1\epsilon\right)^2\frac{1}{\Pe^2}
            \right\}
    \end{equation*}
   for all $\epsilon\in (0,1/20)$ and $\Pe \geq 0$.
   The alternatives are achieved by no flow ($\log(\epsilon^{-1})$) or by the pinching flow ($\log^2(\epsilon^{-1})\Pe^{-2}$) depicted in \cref{fig:flows}b of the introduction.
\end{proposition}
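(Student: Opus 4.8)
\textbf{Lower bound.} I would apply \cref{th:lb-BMO} with the regularized logarithmic dipole potential $\xi$ obtained by taking $\xi=\log|\vec x-\vec x_-|-\log|\vec x-\vec x_+|$ and then capping $|\xi|$ at $\log\frac1\epsilon$, so that $\xi\in H^1(\Omega)\cap L^\infty(\Omega)$ and $\nabla\xi$ vanishes on an $\epsilon$-neighbourhood of each $\vec x_\pm$. Three estimates are needed: (i)~$\|\xi\|_{\BMO(\Omega)}\lesssim 1$ uniformly in $\epsilon$, since $\log|\vec x-\vec x_0|\in\BMO(\Omega)$ with a dimensionless norm, a difference of two such functions does too, and truncation does not increase the $\BMO$ norm; (ii)~$\fint_\Omega|\nabla\xi|^2\dVolume\lesssim\log\frac1\epsilon$, because on $\{\epsilon<|\vec x-\vec x_\pm|<1\}$ one has $|\nabla\xi|\lesssim 1/\mathrm{dist}$ and $\int_\epsilon^1\mathrm{d}r/r=\log\frac1\epsilon$, while the two $\epsilon$-cores contribute only $O(1)$; (iii)~$\fint_\Omega\xi f\dVolume\sim\log\frac1\epsilon$, because $f_\pm$ carries unit mass in $B_\epsilon(\vec x_\pm)$ where $\xi\approx\pm\log\frac1\epsilon$, and the up--down symmetry \cref{e:ex:pinching:symmetry} makes the two contributions add rather than cancel. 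Substituting into \cref{e:lb:proof:general-xi},
\[
    \gradT\;\gtrsim\;\frac{\big(\log\tfrac1\epsilon\big)^{2}}{\log\tfrac1\epsilon+C\,\Pe^{2}}\;\gtrsim\;\min\Big\{\log\tfrac1\epsilon,\;\big(\log\tfrac1\epsilon\big)^{2}\Pe^{-2}\Big\},
\]
the logarithmic blow-up being precisely the $\Hardy$-norm-versus-near-Dirac-mass computation recalled in \cref{ss:BMO-bound}; a pure $L^1$ or negative-Sobolev choice of $\xi$ would be strictly worse here.

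\textbf{Upper bound.} I would test two competitors against the upper bound of \cref{th:steady-VP}. For $\vec u=\vec 0$ the steady temperature is $T=-\Delta^{-1}f$, so $\gradT=\fint_\Omega|\nabla\Delta^{-1}f|^2\dVolume=\|f\|_{H^{-1}(\Omega)}^2/|\Omega|$; the $L^\infty$ and gradient bounds on $f_\pm$ force $|\nabla\Delta^{-1}f|\lesssim 1/\max(\mathrm{dist},\epsilon)$, whose square integrates to $O(\log\frac1\epsilon)$, giving $\gradT\lesssim\log\frac1\epsilon$ for this competitor. For the flowing competitor I would build a steady incompressible field $\vec u=\nabla^\perp\psi$ that channels fluid from a neighbourhood of $\vec x_+$ to a neighbourhood of $\vec x_-$ along circular arcs---modelled on the dipole stream function $\psi\approx\arg(\vec x-\vec x_+)-\arg(\vec x-\vec x_-)$, regularized at scale $\epsilon$ near the balls and closed up into a genuine recirculation away from them---and then solve the pure advection equation $\vec u\cdot\nabla\eta=f$ for $\eta$, which \cref{e:ex:pinching:symmetry} makes possible by a flow respecting the reflection $y\mapsto-y$. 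With $\vec u\cdot\nabla\eta=f$ the second term in \cref{th:steady-VP} vanishes, and after the scaling $\vec u\mapsto\Pe\,\vec u/\|\vec u\|_{L^2(\Omega)}$, $\eta\mapsto\|\vec u\|_{L^2(\Omega)}\,\eta/\Pe$ one is left with $\gradT\le\Pe^{-2}\fint_\Omega|\vec u|^2\dVolume\,\fint_\Omega|\nabla\eta|^2\dVolume$. The arc geometry gives $\fint_\Omega|\vec u|^2\sim\log\frac1\epsilon$ (from $|\vec u|\sim 1/\mathrm{dist}$ cut off at $\epsilon$), and the remaining point is to check $\fint_\Omega|\nabla\eta|^2\lesssim\log\frac1\epsilon$, yielding $\gradT\lesssim\big(\log\frac1\epsilon\big)^2\Pe^{-2}$. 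Taking the better competitor gives $\min\gradT\lesssim\min\{\log\frac1\epsilon,(\log\frac1\epsilon)^2\Pe^{-2}\}$.

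\textbf{Main obstacle.} The delicate part is the pinching construction: one must produce a genuinely divergence-free velocity (so the source-like behaviour near $\vec x_+$ is realized through recirculation, not a literal source), confirm that $\vec u\cdot\nabla\eta=f$ is solvable for it---exactly, or up to a controllably small $H^{-1}(\Omega)$ error that stays negligible after rescaling---and carry the $\epsilon$-scale regularization through while keeping the product $\fint_\Omega|\vec u|^2\dVolume\,\fint_\Omega|\nabla\eta|^2\dVolume$ down to exactly $(\log\frac1\epsilon)^2$, with no stray logarithms. Matching the Hardy-space lower bound is what pins down the flow geometry.
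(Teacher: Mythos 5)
Your proposal is correct and follows essentially the same strategy as the paper: a capped logarithmic test function of height $\log\frac1\epsilon$ in \cref{th:lb-BMO} for the lower bound (the paper uses compactly supported truncated logs $\pm\log\frac{1}{4(r\vee\epsilon)}$ around each of $\vec{x}_\pm$ rather than a global capped dipole, but the three estimates are identical), and for the upper bound the same two competitors — no flow, and a pinching flow with $|\vec u|\sim 1/\mathrm{dist}$ solving $\vec u\cdot\nabla\eta=f$ exactly so that only the product $\Pe^{-2}\fint_\Omega|\vec u|^2\dVolume\,\fint_\Omega|\nabla\eta|^2\dVolume\sim\log^2(\frac1\epsilon)\Pe^{-2}$ survives. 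The one sub-step where you genuinely diverge is the no-flow case: you bound $\fint_\Omega|\nabla\Delta^{-1}f|^2\dVolume$ by the pointwise estimate $|\nabla\Delta^{-1}f|\lesssim (|\vec x-\vec x_\pm|\vee\epsilon)^{-1}$, which requires gradient bounds on the Neumann Green's function of the square; the paper instead argues by duality, writing $\fint_\Omega|\nabla\Delta^{-1}f|^2\dVolume$ as a supremum over $\varphi$ and controlling $\int_\Omega\varphi_\pm f_\pm\dVolume$ by telescoping averages over a chain of $N\sim\log\frac1\epsilon$ nested squares with Poincar\'e's inequality. Your route is more direct but leans on Green's function regularity near the boundary; the paper's is softer and generalizes more easily to other domains. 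The construction you flag as the main obstacle is indeed where the paper spends its effort (explicit trapezoidal channels feeding a horizontal flow through a rectangle $R_\epsilon\supset B_\epsilon(\vec x_+)$, with $\eta$ obtained by integrating $f/\partial_y\psi$ along streamlines, and the verification that $\fint_\Omega|\nabla\eta|^2\dVolume\lesssim\log\frac1\epsilon$ using $|f|\lesssim\epsilon^{-2}$, $|\nabla f|\lesssim\epsilon^{-3}$), but your sketch identifies the correct geometry and the correct bookkeeping of logarithms.
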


\begin{remark}
Our setup is already quite general, but one can relax it further without altering the scaling of the result. This includes allowing the symmetry condition \cref{e:ex:pinching:symmetry} to hold only after integration in $x$, or considering general domains $\Omega$ that include the pinching flows we use to prove the upper bound.
\end{remark}

\begin{proof}[Proof of the lower bound] Again we begin with the lower bound
 \begin{equation}\label{e:ex:pinch:lb-inf-norm}
        \left\langle |\nabla T|^{2}\right\rangle
        \geq
        \frac{ \left(\fint_\Omega {\xi}f \dVolume \right) ^{2}}{
        \fint_\Omega |\nabla{\xi}|^{2} \dVolume+
        C(\Omega) \|\xi\|_{\BMO(\Omega)}^2  \Pe^2  }
    \end{equation}
from \cref{th:lb-BMO}.
Recall the example of the smoothed Dirac mass discussed in \cref{ss:BMO-bound}, which had logarithmically diverging $\mathcal{H}^1$-norm as $\epsilon \to 0$. This prompts us to look for a test function $\xi(\vec{x})$ with the properties that
    \begin{equation*}
        \int_\Omega \xi f \dVolume \gtrsim \log \frac1\epsilon \quad\text{and}\quad \|\xi\|_{\BMO(\Omega)}\lesssim 1,
    \end{equation*}
which would prove in the present setting that $\|f\|_{\mathcal{H}^1} \gtrsim \log \epsilon^{-1}$.    %
    A suitable choice is given by
    \begin{equation*}
    \xi(\vec{x}) = \begin{cases}
        \xi_0(|\vec{x}-\vec{x}_+|)
        &\text{if } |\vec{x}-\vec{x}_+|\leq \frac14\\
        -\xi_0(|\vec{x}-\vec{x}_-|)
        &\text{if } |\vec{x}-\vec{x}_-|\leq \frac14\\
        0
        &\text{otherwise}
    \end{cases}
    \qquad\text{where}\qquad
    \xi_0(r) = \begin{cases}
        \log(\frac{1}{4\epsilon}) &\text{if }r\leq\epsilon\\
        \log(\frac{1}{4r}) &\text{if }\epsilon < r \leq 1/4\\
        0 &\text{otherwise}
    \end{cases}.
    \end{equation*}
For one,
%
\begin{equation*}
\int_{\Omega}\xi f\dVolume =\int_{B_{\epsilon}(\vec{{x}}_{+})}\xi_{0}(|\vec{x}-\vec{x}_{+}|)f_{+}(\vec{x})\dVolume+\int_{B_{\epsilon}(\vec{{x}}_{-})}\xi_{0}(|\vec{x}-\vec{x}_{-}|)f_{-}(\vec{x})\dVolume
 =2\log\left(\frac{1}{4\epsilon}\right)\gtrsim \log\frac{1}{\epsilon}.
\end{equation*}
    %
Also, $\|\xi\|_{\BMO(\Omega)}\lesssim 1$ as $\log(|\vec{x}|)\in \BMO(\mathbb{R}^d)$, and since the minimum and maximum of two functions $g,h\in \BMO(\mathbb{R}^d)$ have $\BMO$-norms bounded by a multiple of $\|g\|_{\BMO(\mathbb{R}^d)}+\|h\|_{\BMO(\mathbb{R}^d)}$ (see \cite[Ch. IV]{Stein1993book}).

Continuing, we compute the $H^1$-norm in the dominator of \cref{e:ex:pinch:lb-inf-norm}. Evidently,
\begin{equation*}
    \int_{\Omega}|\nabla\xi|^{2}\dVolume =\int_{\Omega\backslash B_{\epsilon}(\vec{{x}}_{+})}|\nabla\xi_{0}(|\vec{x}-\vec{x}_{+}|)|^{2}\dVolume+\int_{\Omega\backslash B_{\epsilon}(\vec{{x}}_{-})}|\nabla\xi_{0}(|\vec{x}-\vec{x}_{-}|)|^{2}\dVolume
    \lesssim\int_{r=\epsilon}^{r=1/4}\frac{1}{r}\,dr
    \lesssim\log\frac{1}{\epsilon}.
\end{equation*}
Assembling the estimates shows that
    \begin{equation*}
        \gradT
        \gtrsim \frac{\left(\log\frac1\epsilon\right)^2}{\log\frac1\epsilon + \Pe^2}
        \sim \min\left\{
            \log\frac1\epsilon,\,
            \left(\log\frac1\epsilon\right)^2\frac{1}{\Pe^2}
        \right\}.
    \end{equation*}

    \paragraph{Proof of the upper bound}
We turn to construct steady velocity fields $\vec{u}(\vec{x})$  saturating the lower bound. Arguing just as in proof of \cref{th:cellular-flows} (see the derivation leading up to \cref{eq:upper-bound-Pe}) we apply \cref{th:steady-VP} to show that
 \begin{equation}
    \left\langle |\nabla T|^{2}\right\rangle \leq\frac{1}{\Pe^{2}}
    \fint_\Omega |\vec{u}|^{2} \dVolume
     \fint_\Omega |\nabla \eta|^{2} \dVolume
    +\fint_\Omega \abs{\nabla\Delta^{-1}(\vec{u}\cdot\nabla\eta-f)}^{2}\dVolume
    \label{eq:upper-bound-Pe-secondex}
\end{equation}
where $T$ is the temperature field associated to the scaled version of $\vec{u}$ with mean kinetic energy $\Pe$. Again, this upper bound applies for any choice of $\vec{u}$ and $\eta$, regardless of the $L^2$-norm of the velocity. We shall consider two different choices for $(\vec{u},\eta)$, the first of which involves no flow, and the second of which is the anticipated pinching flow.

\paragraph{No flow}
The first possibility is to take $\vec{u}=\vec{0}$. Then $\eta$ drops out in \cref{eq:upper-bound-Pe-secondex}, and we see that
    \begin{equation*}
        \gradT \leq \fint_\Omega \abs{\nabla \Delta^{-1}f}^2 \dVolume = \max_{\substack{\varphi(\vec{x})\\
\fint_{\Omega}\varphi\dVolume=0
}
}\,\frac{\left|\int_{\Omega}\varphi_{\pm}f_{\pm}\right|^{2}}{\fint_{\Omega}|\nabla\varphi|^{2}}.
    \end{equation*}
To prove that $\gradT \lesssim \log(\epsilon^{-1})$, which is the desired upper bound in this case, it suffices to show that
\begin{equation}\label{e:ex:pinching:no-flow:key-estimate}
\left|\int_{\Omega}\varphi_{\pm}f_{\pm}\dVolume\right|^2\lesssim \log\left(\frac{1}{\epsilon}\right) \int_\Omega |\nabla \varphi|^2\dVolume.
\end{equation}
Here, $\varphi_\pm$ are the positive and negative parts of $\varphi$, and we allow for any combination of pluses and minuses on the left (e.g., $\varphi_+f_-$).
Since the argument is the same for all combinations, we use $\varphi_+f_+$.
By our assumptions on $f$,
    \begin{equation*}
       \int_\Omega \varphi_+f_+\dVolume =  \int_{B_\epsilon(\vec{x}_+)} \varphi_+ f_+ \dVolume
        \lesssim
        \fint_{Q_\epsilon(\vec{x}_+)} \varphi_+ \dVolume
            \end{equation*}
where $Q_\epsilon(\vec{x}_+)$ is the open square of side length $\epsilon$ centered at $\vec{x}_+$. The desired bound now follows from a $\BMO$-type argument, involving controlling consecutive jumps in the average of $\varphi_+$ along a sequence of squares starting with $Q_{\epsilon}(\vec{x}_+)$ and ending at $\Omega$.

Since $\Omega=(-1,1)^2$, there is a sequence of squares $Q_1,\dots,Q_N\subset\Omega$ of ever increasing diameters and with the following properties: (i) the first square is $Q_1=Q_{\epsilon}(\vec{x}_+)$ and the last square is $\Omega$; (ii) consecutive squares intersect, with an area $|Q_i\cap Q_{i+1}|$ that is within a factor of $5$ of the areas $|Q_i|$ and $|Q_{i+1}|$;
(iii) no more than $10$ squares include any given $\vec{x}\in\Omega$; 
(iv) there are $N\sim \log(1/\epsilon)$ squares in total. 
To use the squares, observe first that
\begin{align*}
\left|\fint_{Q_{i}}\varphi_{+}\dVolume-\fint_{Q_{i+1}}\varphi_{+}\dVolume\right|^{2} & \lesssim\fint_{Q_{i}}\left|\varphi_+-\fint_{Q_{i}}\varphi_{+}\right|^{2}\dVolume+\fint_{Q_{i+1}}\left|\varphi_+-\fint_{Q_{i+1}}\varphi_{+}\right|^{2}\dVolume\\
 & \lesssim\fint_{Q_{i}\cup Q_{i+1}}|\nabla\varphi_{+}|^{2}\dVolume
\end{align*}
by condition (ii) and Poincar\'e's inequality for $d=2$. Summing up over consecutive pairs of squares,
\begin{align*}
\left|\fint_{Q_{\epsilon}(\vec{x}_{+})}\varphi_{+}\dVolume-\fint_{\Omega}\varphi_{+}\dVolume\right|^{2} & =\left|\sum_{i=1}^{N-1}\fint_{Q_{i}}\varphi_{+}\dVolume-\fint_{Q_{i+1}}\varphi_{+}\dVolume\right|^{2}
  \lesssim N\sum_{i=1}^{N-1}\left|\fint_{Q_{i}}\varphi_{+}\dVolume-\fint_{Q_{i+1}}\varphi_{+}\dVolume\right|^{2}\\
 & \lesssim N\sum_{i=1}^{N-1}\int_{Q_{i}\cup Q_{i+1}}|\nabla\varphi_{+}|^{2} \lesssim N\int_{\Omega}|\nabla\varphi_{+}|^{2}
\end{align*}
where in the first step we used condition (i), in the second step we applied the Cauchy--Schwarz inequality and in the last step we used condition (iii). Since
\[
\abs{\fint_\Omega \varphi_+\dVolume}^2\lesssim  \int_\Omega |\varphi|^2\dVolume  \lesssim \int_\Omega |\nabla \varphi|^2\dVolume
\]
we can conclude the result. In particular,
\begin{align*}
\abs{\int_\Omega \varphi_+f_+}^2 &\lesssim \abs{\fint_{Q_\epsilon(\vec{x}_+)}\varphi_+}^2 \lesssim  \abs{\fint_{\Omega}\varphi_+}^2 + \abs{\fint_{Q_\epsilon(\vec{x}_+)}\varphi_+ - \fint_{\Omega}\varphi_+}^2 \\
&\lesssim (1+N)\int_\Omega |\nabla \varphi|^2 \lesssim \log\left(\frac{1}{\epsilon}\right)\int_\Omega |\nabla \varphi|^2
\end{align*}
by condition (iv). This shows \cref{e:ex:pinching:no-flow:key-estimate} and hence
    \begin{equation*}
        \gradT \lesssim \log\frac1\epsilon
    \end{equation*}
for the choice $\vec{u}=\vec{0}$.

\paragraph{Pinching flows}
Next we show how to achieve $ \gradT \lesssim \log^2(\epsilon^{-1})\Pe^{-2}$ using a `pinching' flow. The flow we have in mind squeezes a large portion of the domain $\Omega$ into the balls $B_\epsilon(\vec{x}_{\pm})$ where the heat is being added and taken away. This requires the velocity to grow as $1/|\vec{x}-\vec{x}_\pm|$, which results in a logarithmically diverging kinetic energy. At the same time we will enforce the pure advection equation $\vec{u}\cdot\nabla \eta = f$ leading to a similar divergence in the homogeneous $H^1$-norm of $\eta$. Using all of this in the bound
\begin{equation}\label{e:ex:pinching:ub-2}
\left\langle |\nabla T|^{2}\right\rangle \leq\frac{1}{\Pe^{2}}
\fint_\Omega |\vec{u}|^{2} \dVolume
 \fint_\Omega |\nabla \eta|^{2} \dVolume
\end{equation}
which follows from \cref{eq:upper-bound-Pe-secondex} will lead to the desired result.

The key task is to find a way to solve the pure advection equation with the given source--sink functions $f=f_+-f_-$. Our solution will be symmetric under the reflection $(x,y) \mapsto (x,-y)$, so we define it explicitly on the upper-half plane.
Introduce polar coordinates $(r,\theta)$ centered at $(0,1/2+2\epsilon)$, and let $R_\epsilon$ be the rectangle centered at $\vec{x}_+=(0,1/2)$ with vertical side length $2\epsilon$ and horizontal side length $2\smash{\sqrt{3}}\epsilon$.
The rectangle is defined such that it contains the ball $B_\epsilon(\vec{x}_+)$ where the source $f_+$ is supported, and is such that its top and bottom sides are tangent to this ball. Outside of $R_\epsilon$ and for $y>0$, we define $\vec{u}=\nabla^\perp\psi_1$ with
    \begin{equation*}
        \psi_1(\theta) = \begin{cases}
            \frac{11\pi}{6}-\theta & \theta\in(\frac{5\pi}{3},\frac{11\pi}{6}]\\
            \frac{\pi}{6} & \theta\in(\frac{4\pi}{3},\frac{5\pi}{3}]\\
            \theta-\frac{7\pi}{6} & \theta\in(\frac{7\pi}{6},\frac{4\pi}{3}]\\
            0 & \text{otherwise}
        \end{cases}.
    \end{equation*}
    The streamlines are left-right symmetric and are arranged in two trapezoidal channels, and the flow enters the rectangle $R_\epsilon$ from the right and exits it on the left. Inside $R_\epsilon$, we use a horizontal flow that matches the inflow and outflow conditions of the prior construction on the vertical sides of $R_\epsilon$. Specifically, we take $\vec{u}=\nabla^\perp\psi_2$ with
    \begin{equation*}
        \psi_2(y) =
        \psi_1\left(2\pi - \arctan\left(\frac{2y-1-4\epsilon}{2\sqrt{3}\epsilon}\right)\right)
        = -\frac{\pi}{6} - \arctan\left(\frac{2y-1-4\epsilon}{2\sqrt{3}\epsilon}\right).
    \end{equation*}
The rest of the flow is defined by odd reflection across the line $y=0$.

Having chosen $\vec{u}$, we now show how to solve $\vec{u} \cdot \nabla \eta = f$ to produce the required test function $\eta$. Inside $R_\epsilon$  the equation simplifies to $\partial_y \psi_2 \partial_x \eta = f$, which we integrate to get $\eta=\eta_2$ with 
\begin{equation*}
    \eta_2(\vec{x}) = \int_0^x \frac{f(s,y)}{\partial_y \psi_2(y)} \,{\rm d}s
    = -\int_0^x  \frac{12\epsilon^2 + (2y-1-4\epsilon)^2}{4\sqrt{3}\epsilon}\,f(s,y)\,{\rm d}s.
\end{equation*}
%
Outside of $R_\epsilon$ and for $y>0$, we take $\eta=\eta_1=0$ in regions of no flow, and choose $\eta_1$ to be otherwise constant along the streamlines. Matching conditions are imposed to ensure continuity across the boundary of $R_\epsilon$. In formulas,
\begin{equation*}
        \eta_1(\theta) = \begin{cases}
                \eta_2(\sqrt{3}\epsilon,\frac12 + 2\epsilon + \sqrt{3}\epsilon\tan\theta)
                &\text{if }\theta \in (\frac{5\pi}{3},\frac{11\pi}{6}]
                \\
                \eta_2(-\sqrt{3}\epsilon,\frac12 + 2\epsilon - \sqrt{3}\epsilon\tan\theta)
                &\text{if }\theta \in (\frac{7\pi}{6},\frac{4\pi}{3}]
                \\
                0
                &\text{otherwise}
        \end{cases}.
\end{equation*}
This gives $\vec{u} \cdot \nabla \eta_1 = 0 $ outside of $R_\epsilon$, and then we define $\eta$ for $y<0$ by reflection about $y=0$. Altogether, we have produced a pair $(\vec{u},\eta)$ solving the pure advection equation $\vec{u}\cdot\nabla\eta = f$ on $\Omega$.

To complete the proof we must estimate the $L^2$-norms of $\vec{u}$ and $\nabla \eta$.  By the up-down symmetry,
    \begin{align*}
        \int_\Omega \abs{\vec{u}}^2 \dVolume
        &\lesssim
        \int_{(\Omega \setminus R_\epsilon) \cap\{y> 0\} } \abs{\nabla \psi_1}^2 \dVolume +  \int_{R_\epsilon} \abs{\nabla \psi_2}^2 \dVolume
        \\
        &\lesssim
        \int_0^{2\pi}\int_{2\sqrt{3}}^\frac12 \abs{\partial_r \psi_1}^2 + \abs{\frac{1}{r}\partial_\theta \psi_1}^2 \, r {\rm d}r{\rm d}\theta
        +
        \int_{R_\epsilon} \abs{\frac{4\sqrt{3}\epsilon}{12\epsilon^2 + (2y-1-4\epsilon)^2}}^2 \dVolume
        \\
        &\lesssim
        \log\left(\frac1\epsilon\right) + 1
    \end{align*}
A similar calculation using the bounds $\abs{f} \lesssim \epsilon^{-2}$ and $\abs{\nabla f} \lesssim \epsilon^{-3}$ assumed at the start of the example give that \begin{equation*}
        \int_\Omega \abs{\nabla \eta}^2 \dVolume\lesssim        \log\left(\frac1\epsilon\right) + 1.
\end{equation*}
Plugging these estimates into \cref{e:ex:pinching:ub-2} shows that
\[
\left\langle |\nabla T|^{2}\right\rangle \lesssim \frac{\log^2(1/\epsilon)}{\Pe^2}
\]
for our pinching flow.

Using the better of the two flows --- no flow or the pinching flow --- bounds the minimum thermal dissipation  by
\[
 \min\,\gradT \lesssim \min
        \left\{
            \log\frac1\epsilon,\,
            \left(\log\frac1\epsilon\right)^2\frac{1}{\Pe^2}
            \right\}.
\]
The proof is complete. \end{proof}

\section{Asymptotic analysis of steady optimal flows}
\label{sec:asymptotics}
%

\noindent Each of the lower bounds from the previous section rearranges to give an asymptotic result: given a sequence $\{(\vec{u}_n,T_n)\}$ solving the advection-diffusion equation with source--sink $f(\vec{x})$ and with $\langle |\vec{u}_n|^2\rangle \to \infty$,
\begin{equation*}
    \liminf_{n\to\infty}\, \langle |\vec{u}_n|^2\rangle \cdot \langle |\nabla T_n|^2 \rangle \gtrsim_{\Omega,d} \|f\|_{\Hardy(\Omega)}^2 > 0.
\end{equation*}
The cellular and pinching flow examples from \cref{ss:lower-bounds-example} give scenarios in which this bound is sharp in its scaling with respect to the mean kinetic energy $\langle |\vec{u}_n|^2\rangle$, as well as features of $f$. Motivated by this, we now ask what it takes for a sequence of velocity fields to be `almost optimal' in the sense that their thermal dissipation is minimized at leading order. Focusing on the fully steady case where $\vec{u}=\vec{u}(\vec{x})$,  $f=f(\vec{x})$ and $T=T(\vec{x})$, we obtain a limiting variational problem whose minimizers encode key asymptotic properties of almost minimizers (including minimizers as a special case). The minimum value of this problem gives the sharpest possible asymptotic lower bound.

A word about setup is required, especially regarding the regularity of our velocity fields. Depending on the application, one may wish to constrain a different norm of the velocity other than the kinetic energy-based $L^2$-one we have used so far (e.g., the convection problem treated in \cref{sec:buoyancy} lends itself to the $H^1$-norm). In this section, we consider divergence-free and no-penetration velocities $\vec{u}$ belonging to a general Banach space $(X,\|\cdot\|_X)$, which for a technical reason we must assume is continuously embedded into $L^d(\Omega)$ via the inequality $\|\cdot\|_{L^d(\Omega)}  \lesssim \|\cdot \|_X$. 
We further assume $X$ is a dual space, so that its unit ball $\|\cdot\|_X\leq 1$ is weak-$*$ compact  \cite{Brezis2011}; 
this ensures the existence of optimizers for the problems we consider below.
Following \cref{rem:reduced-regularity}, we let $f\in H^{-1}(\Omega)$ and be mean-free. 

Given this setup, we ask to take the parameter $\Pe \to \infty$ in the sequence of minimization problems
\begin{equation}\label{eq:the-minimization-problem}
\min_{\substack{\vec{u}(\vec{x}) \\ \|\vec{u}\|_X\leq\Pe \\ \vec{u}\cdot\nabla T = \Delta T + f}}
\, \fint_\Omega |\nabla T|^2\dVolume.
 \end{equation}
Applying the sharp variational upper bound from \cref{th:steady-VP}, we learn that
  \begin{equation}\label{eq:sharp-integral-formulation}
        \min_{\substack{\vec{u}(\vec{x}) \\ \|\vec{u}\|_X\leq\Pe \\ \vec{u}\cdot\nabla T = \Delta T + f}}
\, \fint_\Omega |\nabla T|^2\dVolume
        =
        \min_{\substack{
            \vec{u}(\vec{x}),\eta(\vec{x})\\
            \|\vec{u}\|_{X}\leq\Pe
        }
        }\, \fint_\Omega |\nabla\eta|^{2} +  | \nabla \Delta^{-1}(\vec{u} \cdot \nabla \eta-f)|^{2} \dVolume
    \end{equation}
where the admissible $\eta$ belong to $H^1(\Omega)$. The differential equation on the left-hand side is embedded in the optimization on the right. It follows from the right-hand formulation that optimal velocities achieve $\|\vec{u}\|_X = \Pe$ if $f$ is not identically zero, since otherwise one could decrease the minimum by replacing $(\vec{u},\eta)$ with $(\lambda \vec{u},\lambda^{-1}\eta)$ for some $\lambda >1$.

First, we identify a sufficient and necessary condition for the minimum to scale as $\Pe^{-2}$.
\begin{lemma}\label{lem:bdd-limit} There holds
        \[
    \limsup_{\Pe \to \infty} \min_{\substack{\vec{u}(\vec{x}) \\ \|\vec{u}\|_X\leq\Pe \\ \vec{u}\cdot\nabla T = \Delta T + f}}
\, \Pe^2 \fint_\Omega |\nabla T|^2\dVolume
     <\infty
    \]
  if and only if there exists $(\vec{u}_0,T_0)\in X\times H^1(\Omega)$ satisfying
    \begin{equation}
    \label{eq:examples:pure-advection}
        \begin{cases}
        \vec{u}_{0}\cdot\nabla T_{0}=f & \text{in }\Omega\\
        \nabla\cdot\vec{u}_0=0 & \text{in }\Omega\\
        \vec{u}_{0}\cdot\hat{{\vec{{n}}}}=0 & \text{at }\partial\Omega
        \end{cases}.
    \end{equation}
\end{lemma}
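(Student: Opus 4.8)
The plan is to prove the two implications separately, using the sharp variational identity \cref{eq:sharp-integral-formulation} as the main bridge between the differential equation and the optimization. The easy direction is ``if'': suppose a pair $(\vec{u}_0,T_0)\in X\times H^1(\Omega)$ solves \cref{eq:examples:pure-advection}. Then for each $\Pe$ large enough I would plug the rescaled pair $\vec{u} = \Pe\,\vec{u}_0/\|\vec{u}_0\|_X$ and $\eta = \|\vec{u}_0\|_X T_0/\Pe$ into the right-hand side of \cref{eq:sharp-integral-formulation}. Since $\vec{u}_0\cdot\nabla T_0 = f$, the scalings cancel exactly in the advective term, i.e. $\vec{u}\cdot\nabla\eta - f = 0$, so the second term vanishes and only $\fint_\Omega |\nabla\eta|^2\dVolume = \|\vec{u}_0\|_X^2 \fint_\Omega|\nabla T_0|^2 \dVolume / \Pe^2$ survives. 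Multiplying by $\Pe^2$ gives a bound independent of $\Pe$, so the $\limsup$ is finite. (One should check $\|\vec{u}_0\|_X\neq 0$; but if $\vec{u}_0 = 0$ then $f=0$ and the statement is trivial, with the zero flow and any $T_0$.)

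The substantive direction is ``only if''. Assume $\limsup_{\Pe\to\infty}\Pe^2 \min \fint_\Omega|\nabla T|^2\dVolume =: M < \infty$. Pick a sequence $\Pe_n\to\infty$ and near-optimizers $(\vec{u}_n,\eta_n)$ of the right-hand side of \cref{eq:sharp-integral-formulation} with $\|\vec{u}_n\|_X = \Pe_n$ (we may take equality, as noted after \cref{eq:sharp-integral-formulation}), so that $\Pe_n^2 \big(\fint_\Omega|\nabla\eta_n|^2 + |\nabla\Delta^{-1}(\vec{u}_n\cdot\nabla\eta_n - f)|^2\,\dVolume\big) \leq M + o(1)$. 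Rescale: set $\vec{u}_{0,n} := \vec{u}_n/\Pe_n$, so $\|\vec{u}_{0,n}\|_X = 1$, and set $T_{0,n} := \Pe_n \eta_n$. The bound becomes $\fint_\Omega |\nabla T_{0,n}|^2\dVolume \lesssim M + o(1)$ (uniformly bounded) and, crucially, $\fint_\Omega |\nabla\Delta^{-1}(\vec{u}_{0,n}\cdot\nabla T_{0,n} - f)|^2\dVolume = \Pe_n^2 \cdot O(\Pe_n^{-2}) \cdot \Pe_n^{-2}\cdot\Pe_n^2\ldots$ — more carefully, $\vec{u}_n\cdot\nabla\eta_n - f = \vec{u}_{0,n}\cdot\nabla T_{0,n} - f$ after the rescaling cancels (one factor of $\Pe_n$ up from $\vec{u}_n$, one factor down from $\eta_n$), so this residual is bounded by $(M+o(1))/\Pe_n^2 \to 0$ in the $H^{-1}$-norm. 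Thus $\|\vec{u}_{0,n}\cdot\nabla T_{0,n} - f\|_{H^{-1}(\Omega)} \to 0$.

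Now I would extract a limit. Since $\{T_{0,n}\}$ is bounded in $H^1(\Omega)$ (after subtracting means, using Poincaré; means may be fixed to zero), a subsequence converges weakly in $H^1(\Omega)$ and strongly in $L^2(\Omega)$ — and, by the embedding $X\hookrightarrow L^d(\Omega)$ and Rellich, we may also upgrade $T_{0,n}$ strongly in $L^{d'}(\Omega)$ where $d' = d/(d-1)$ is conjugate to $d$ — to some $T_0\in H^1(\Omega)$. Since $\|\vec{u}_{0,n}\|_X = 1$ and $X$ is a dual space with weak-$*$ compact unit ball, a further subsequence has $\vec{u}_{0,n}\rightharpoonup^* \vec{u}_0$ in $X$, hence weakly in $L^d(\Omega)$, with $\vec{u}_0$ still divergence-free and no-penetration (these are closed conditions) and $\|\vec{u}_0\|_X \leq 1$. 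The main obstacle — the one genuinely delicate point — is passing to the limit in the bilinear term $\vec{u}_{0,n}\cdot\nabla T_{0,n}$, which is a product of two merely weakly convergent sequences. I would handle it by testing against a fixed smooth $\varphi$: write $\int_\Omega (\vec{u}_{0,n}\cdot\nabla T_{0,n})\varphi\,\dVolume = -\int_\Omega (\vec{u}_{0,n}\cdot\nabla\varphi)\, T_{0,n}\,\dVolume$ (integrating by parts, using $\nabla\cdot\vec{u}_{0,n} = 0$ and $\vec{u}_{0,n}\cdot\hat{\vec{n}} = 0$), and now $\vec{u}_{0,n}\cdot\nabla\varphi \rightharpoonup \vec{u}_0\cdot\nabla\varphi$ weakly in $L^d(\Omega)$ while $T_{0,n}\to T_0$ strongly in $L^{d'}(\Omega)$, so the product converges to $-\int_\Omega(\vec{u}_0\cdot\nabla\varphi)T_0\,\dVolume = \int_\Omega(\vec{u}_0\cdot\nabla T_0)\varphi\,\dVolume$. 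Combined with $\vec{u}_{0,n}\cdot\nabla T_{0,n}\to f$ in $H^{-1}(\Omega)$ (hence in the sense of distributions), this identifies $\vec{u}_0\cdot\nabla T_0 = f$ as an element of $H^{-1}(\Omega)$, completing the construction of the required pair and finishing the proof. (The use of $d' < 2$ strong convergence of $T_{0,n}$ is exactly why the hypothesis $X\hookrightarrow L^d(\Omega)$ is imposed; in the borderline $d=2$ case one instead uses that $T_{0,n}\to T_0$ strongly in every $L^q$, $q<\infty$, and $\vec{u}_{0,n}$ is bounded in $L^2$, again making the product pass to the limit.)
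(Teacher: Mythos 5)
Your proof is correct, and its overall skeleton — treat the ``if'' direction by substituting the rescaled pair $(\Pe\,\vec{u}_0,\Pe^{-1}T_0)$ into the sharp variational formulation \cref{eq:sharp-integral-formulation}, and the ``only if'' direction by rescaling a bounded sequence, extracting weak-$*$ limits via Banach--Alaoglu, and passing to the limit in the advective product — is the same as the paper's. There are two genuine differences in execution. First, you obtain the vanishing residual $\|\vec{u}_{0,n}\cdot\nabla T_{0,n}-f\|_{H^{-1}(\Omega)}\to 0$ from near-optimizers of the variational functional on the right of \cref{eq:sharp-integral-formulation}, whereas the paper works directly with the PDE solutions and reads off the residual as $\Pe^{-1}\Delta T_{\Pe}$, whose $H^{-1}$-norm is $\Pe^{-1}\|\nabla T_{\Pe}\|_{L^2(\Omega)}\leq \Pe^{-1}$; both routes give the same estimate, and yours has the minor advantage of not needing exact minimizers to exist at finite $\Pe$. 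Second, where the paper cites the div--curl lemma of compensated compactness to identify the limit of $\vec{u}_{0,n}\cdot\nabla T_{0,n}$, you give a self-contained argument: integrate by parts using $\nabla\cdot\vec{u}_{0,n}=0$ and no-penetration to move the derivative onto a smooth test function, then pair weak $L^d$ convergence of $\vec{u}_{0,n}\cdot\nabla\varphi$ against strong $L^{d'}$ convergence of $T_{0,n}$ from Rellich. This is exactly the elementary proof of the div--curl lemma in the special case where one factor is a full gradient, so it is a valid (and arguably more transparent) replacement; it also makes explicit why the hypothesis $X\hookrightarrow L^d(\Omega)$ is the right one. Two small blemishes worth fixing in a final write-up: the displayed chain ``$\Pe_n^2\cdot O(\Pe_n^{-2})\cdots$'' before your self-correction should simply be deleted, and the residual bound $(M+o(1))/\Pe_n^2$ controls the \emph{square} of the $H^{-1}$-norm, not the norm itself (the conclusion is unaffected). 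Also note the compactness of $\{T_{0,n}\}$ in $L^{d'}(\Omega)$ comes from Rellich alone; the embedding $X\hookrightarrow L^d(\Omega)$ is only needed for the weak convergence of the velocities.
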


\begin{proof}
   That the existence of $(\vec{u}_0,T_0)$ implies the asserted $\Pe^{-2}$ bound follows from the right-hand formulation of the optimization in \cref{eq:sharp-integral-formulation}. Indeed, we can always assume that $\|\vec{u}_0\|_X = 1$, and then setting $(\vec{u},\eta)= (\Pe \vec{u}_0,\Pe^{-1}T_0)$ into \cref{eq:sharp-integral-formulation} shows that $\min\, \fint_\Omega |\nabla T|^2 \leq \Pe^{-2}\fint_\Omega |\nabla T_0|^2$ for all $\Pe$.

    For the reverse implication, let $\{\vec{u}_{\Pe}\}$ be an admissible sequence for the finite-$\Pe$ problems, with $\|\vec{u}_{\Pe}\|_X \leq \Pe$ and whose temperatures $\{T_{\Pe}\}$ obey $ \fint_\Omega |\nabla T_{\Pe}|^2 \lesssim \Pe^{-2}$. Rescale to the variables $(\tilde{\vec{u}}_{\Pe}, \tilde{T}_{\Pe}) := (Pe^{-1}\vec{u}_{\Pe},\Pe T_{\Pe})$ to find that
    \[
          \|\vec{u}_{\Pe}\|_X = 1,\quad \|\nabla T_{\Pe}\|_{L^2(\Omega)} \leq 1\quad\text{and}\quad  \vec{u}_{\Pe} \cdot \nabla T_{\Pe} = \frac{1}{\Pe} \Delta T_{\Pe} + f
    \]
after dropping the tildes. Applying the Banach-Alaoglu theorem \cite{LaxFunctionalAnalysis} to the dual Banach space $X$ and using our assumption that it is continuously embedded into $L^d(\Omega)$, hence also in $L^2(\Omega)$ since $d\geq 2$, we can extract a subsequence $\{\vec{u}_{\Pe}, T_{\Pe}\}$ (not relabeled)  converging weakly-$*$ to $(\vec{u}_0, T_0)$ both in $X\times H^1(\Omega)$ and in $L^2(\Omega)\times H^1(\Omega)$. Note
\begin{equation*}
\|\vec{u}_{\Pe}\cdot\nabla T_{\Pe}-f\|_{H^{-1}(\Omega)} =\left\|\frac{1}{\Pe}\Delta T_{\Pe} \right\|_{H^{-1}(\Omega)}
=\frac{1}{\Pe}\|\nabla T_{\Pe}\|_{L^{2}(\Omega)}
\leq\frac{1}{\Pe}
\to0
\end{equation*}
by the definition of the $H^{-1}$-norm in \cref{e:Hm1-norm}. An application of the div-curl lemma~\cite[Theorem 4 in \S5.B]{Evans1990} then verifies that the dot product $\vec{u}_{\Pe} \cdot \nabla T_{\Pe}$ converges to $\vec{u}_0 \cdot \nabla T_0$, and hence $\vec{u}_0 \cdot \nabla T_0 = f$.
    %
The incompressibility and no-penetration conditions for $\vec{u}_{\Pe}$ are also preserved in the weak-$*$ limit, so that they hold for $\vec{u}_0$.
\end{proof}

We come now to the main result of this section, in which we rescale the minimization problem \cref{eq:the-minimization-problem} by $\Pe^{-2}$ and take $\Pe\to \infty$. A sequence of admissible velocities $\{\vec{u}_{\Pe}\}$ with $\|\vec{u}_{\Pe}\|_X\leq \Pe$ is said to be \emph{almost minimizing} if their corresponding (steady) temperature fields $\{T_{\Pe}\}$ satisfy
\begin{equation}\label{eq:def-almost-min}
\fint_\Omega |\nabla T_{\Pe}|^2\dVolume = \min_{\substack{\vec{u}(\vec{x}) \\ \|\vec{u}\|_X\leq\Pe \\ \vec{u}\cdot\nabla T = \Delta T + f}}
\, \fint_\Omega |\nabla T|^2\dVolume + o(\Pe^{-2})\quad \text{as }\Pe\to\infty.
\end{equation}
Included in this definition are sequences of optimizers.

\begin{theorem}\label{th:limitbd}
  Assume the pure advection system \cref{eq:examples:pure-advection} has a solution in $X\times H^1(\Omega)$. Then,
    \begin{equation}
    \label{eq:ex:asympt-limit}
    \lim_{\Pe\to\infty}
    \min_{\substack{\vec{u}(\vec{x}) \\ \|\vec{u}\|_X\leq\Pe \\ \vec{u}\cdot\nabla T = \Delta T + f}}
\, \Pe^2 \fint_\Omega |\nabla T|^2\dVolume
    =
    \min_{\substack{\vec{u}_0(\vec{x}),T_0(\vec{x}) \\ \|\vec{u}_0\|_{X} \leq 1\\
    \vec{u}_0\cdot\nabla T_0=f
    }
    }\, \fint_\Omega |\nabla T_0|^2\dVolume
    \end{equation}
where the minimization on the right is over all solutions of \cref{eq:examples:pure-advection} in $X\times H^1(\Omega)$. Also,  $\vec{u}_0$ solves this limiting problem if and only if it is the weak-$*$ limit point in $X$ of a sequence $\{\Pe^{-1}\vec{u}_{\Pe}\}$ where $\{\vec{u}_{\Pe}\}$ is almost minimizing on the left.
 \end{theorem}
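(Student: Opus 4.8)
The plan is to sandwich the rescaled minimum $m(\Pe):=\min_{\|\vec{u}\|_X\leq\Pe,\ \vec{u}\cdot\nabla T=\Delta T+f}\fint_\Omega|\nabla T|^2\dVolume$ between two matching bounds, conclude that $\Pe^2 m(\Pe)\to m_0$ where $m_0$ denotes the right-hand minimum in \cref{eq:ex:asympt-limit}, and then read off the characterization of optimizers from the same two arguments.

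\emph{Step 1 (a cheap upper bound valid for every $\Pe$).} First I would show $\Pe^2 m(\Pe)\leq m_0$ for all $\Pe$. Given any solution $(\vec{u}_0,T_0)$ of \cref{eq:examples:pure-advection} with $\|\vec{u}_0\|_X\leq1$, insert the pair $(\vec{u},\eta)=(\Pe\vec{u}_0,\Pe^{-1}T_0)$ into the sharp variational identity \cref{eq:sharp-integral-formulation} coming from \cref{th:steady-VP}; the advection defect $\vec{u}\cdot\nabla\eta-f=\vec{u}_0\cdot\nabla T_0-f$ vanishes, leaving $m(\Pe)\leq\fint_\Omega|\nabla(\Pe^{-1}T_0)|^2\dVolume=\Pe^{-2}\fint_\Omega|\nabla T_0|^2\dVolume$. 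Minimising over $(\vec{u}_0,T_0)$ gives the claim, hence $\limsup_{\Pe\to\infty}\Pe^2 m(\Pe)\leq m_0$, consistent with \cref{lem:bdd-limit}.

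\emph{Step 2 (lower bound via compensated compactness, and existence of the limit).} Next I would take an almost minimizing sequence $\{\vec{u}_{\Pe}\}$ with temperatures $\{T_{\Pe}\}$, so that $\Pe^2\fint_\Omega|\nabla T_{\Pe}|^2\dVolume=\Pe^2 m(\Pe)+o(1)$, which stays bounded by Step 1. Rescaling to $\tilde{\vec{u}}_{\Pe}:=\Pe^{-1}\vec{u}_{\Pe}$ and $\tilde{T}_{\Pe}:=\Pe T_{\Pe}$ turns the constraint into $\tilde{\vec{u}}_{\Pe}\cdot\nabla\tilde{T}_{\Pe}=\Pe^{-1}\Delta\tilde{T}_{\Pe}+f$ with $\|\tilde{\vec{u}}_{\Pe}\|_X\leq1$ and $\|\nabla\tilde{T}_{\Pe}\|_{L^2(\Omega)}$ bounded. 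Using that $X$ is a dual space (Banach--Alaoglu) and $X\hookrightarrow L^d(\Omega)\hookrightarrow L^2(\Omega)$ since $d\geq2$, I would pass to a subsequence with $\tilde{\vec{u}}_{\Pe}\rightharpoonup\vec{u}_0$ weakly-$*$ in $X$ and weakly in $L^2$, $\tilde{T}_{\Pe}\rightharpoonup T_0$ weakly in $H^1$, and $\|\vec{u}_0\|_X\leq1$ by weak-$*$ lower semicontinuity of the norm. The forcing term obeys $\|\Pe^{-1}\Delta\tilde{T}_{\Pe}\|_{H^{-1}(\Omega)}=\Pe^{-1}\|\nabla\tilde{T}_{\Pe}\|_{L^2(\Omega)}\to0$, so — running the div-curl lemma on the divergence-free $\tilde{\vec{u}}_{\Pe}$ against the curl-free $\nabla\tilde{T}_{\Pe}$, exactly as in the proof of \cref{lem:bdd-limit} — the product converges to $\vec{u}_0\cdot\nabla T_0$ distributionally, giving $\vec{u}_0\cdot\nabla T_0=f$ together with the inherited incompressibility and no-penetration conditions. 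Weak lower semicontinuity of the Dirichlet energy then yields $m_0\leq\fint_\Omega|\nabla T_0|^2\dVolume\leq\liminf\fint_\Omega|\nabla\tilde{T}_{\Pe}|^2\dVolume=\liminf\Pe^2 m(\Pe)$. Combined with Step 1, the limit exists and equals $m_0$; the same weak-$*$/div-curl argument applied to a minimizing sequence for the right-hand problem (whose temperatures, after subtracting their means, are bounded in $H^1$) shows $m_0$ is attained, so writing $\min$ on the right is justified.

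\emph{Step 3 (characterization of optimizers).} Finally I would obtain the ``if and only if'' from Step 2. If $\vec{u}_0$ is a weak-$*$ limit point of $\{\Pe^{-1}\vec{u}_{\Pe}\}$ for some almost minimizing $\{\vec{u}_{\Pe}\}$, running Step 2 along the convergent subsequence shows $\vec{u}_0$ is admissible for the limiting problem with $\fint_\Omega|\nabla T_0|^2\dVolume\leq\liminf\Pe^2 m(\Pe)=m_0$, hence optimal. Conversely, given an optimal $(\vec{u}_0,T_0)$, take the constant-in-$\Pe$ sequence $\vec{u}_{\Pe}:=\Pe\vec{u}_0$; by Step 1 its temperature obeys $\fint_\Omega|\nabla T_{\Pe}|^2\dVolume\leq\Pe^{-2}m_0$, while $\fint_\Omega|\nabla T_{\Pe}|^2\dVolume\geq m(\Pe)=\Pe^{-2}m_0+o(\Pe^{-2})$ by the limit just proved, so $\{\vec{u}_{\Pe}\}$ is almost minimizing and $\Pe^{-1}\vec{u}_{\Pe}=\vec{u}_0$ is trivially its weak-$*$ limit point. \textbf{The main obstacle} is the passage to the limit in the quadratic term $\vec{u}_{\Pe}\cdot\nabla T_{\Pe}$ in Step 2: weak convergence of the two factors alone does not suffice, and it is precisely the div-curl (compensated-compactness) structure of the advection term — together with $X\hookrightarrow L^d$ placing both factors in $L^2$ and the dual-space hypothesis providing weak-$*$ limits — that rescues the argument. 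The only remaining points of care are the bookkeeping of subsequences and the fact that $T_0$ is controlled only through the bound on $\nabla T_0$.
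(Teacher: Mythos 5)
Your proposal is correct and follows essentially the same route as the paper: the upper bound by inserting the rescaled pair $(\Pe\vec{u}_0,\Pe^{-1}T_0)$ into the sharp variational formulation of \cref{th:steady-VP}, the lower bound by rescaling an almost minimizing sequence and passing to weak-$*$ limits via Banach--Alaoglu together with the div-curl lemma (exactly the mechanism of \cref{lem:bdd-limit}) plus weak lower semicontinuity of the norms, and the optimizer characterization read off from these two halves. The only (harmless) cosmetic difference is that you allow $\|\vec{u}_0\|_X\leq 1$ directly in the upper-bound step, whereas the paper normalizes to $\|\vec{u}_0\|_X=1$ and dismisses the strict inequality as sub-optimal.
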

%
\begin{remark}\label{rem:increasing-norm}
So long as $f$ is not identically zero, any optimal velocity $\vec{u}_0$ in the limiting problem must have unit norm, i.e., $\|\vec{u}_0\|_X=1$. Indeed, increasing the norm of $\vec{u}_0$ decreases the value of $\fint_\Omega |\nabla T_0|^2$ via the coupling $\vec{u}_0\cdot \nabla T_0 = f$. This is the limiting version of the similar observation made directly after \cref{eq:sharp-integral-formulation} regarding finite $\Pe$.
\end{remark}
\begin{remark}
Both the dependence of $T$ on $\vec{u}$ in the finite-$\Pe$ problems, and of optimal $T_0$ on $\vec{u}_0$ at $\Pe = \infty$ are one-to-one. The former is simply the uniqueness-property of steady advection-diffusion; the latter comes from the fact that the limiting minimization problem is strictly convex, hence its minimizers are unique. A partial converse holds: if the closed unit ball $\|\cdot\|_X\leq1$ of $X$ is strictly convex, then the correspondence between optimal $\vec{u}_0$ and optimal $T_0$ is one-to-one. To see this, note that any two optimizers $\vec{u}_0$ and $\vec{u}_0'$ must lie on the boundary of the closed unit ball (by the previous remark). But then their average $(\vec{u}_0+\vec{u}_0')/2$ would also be optimal, which is a contradiction unless $\vec{u}_0=\vec{u}_0'$.
\end{remark}
\begin{remark}
It is natural to ask whether the limit points of the rescaled temperatures $\{\Pe T_{\Pe}\}$ generated by almost minimizers $\{\vec{u}_{\Pe}\}$ are also captured by the limiting problem. In one direction, it follows from the proof below that the weak-$H^1$ limit points of $\{\Pe T_{\Pe}\}$ are always optimal for the limiting problem. The converse holds if the space $X$ has an additional `Radon--Riesz like' property, which requires that every sequence $\{\vec{u}_n\}$ converging weakly-$*$ to a vector $\vec{u}$ with $\|\vec{u}_n\|_X \to \|\vec{u}\|_X$ also converges strongly to $\vec{u}$. 
If $X$ is a Hilbert space then it has this property; uniformly convex spaces such as $L^p(\Omega)$ for $p\in (1,\infty)$ do as well \cite{Brezis2011}. Under this additional assumption, one can prove that the sequence of almost minimizing rescaled velocities $\{\Pe^{-1}\vec{u}_{\Pe}\}$ recovering $\vec{u}_0$ as in the statement actually converge strongly in $X$ to $\vec{u}_0$ (they consistently lie on the boundary of the unit ball). This and the first part of the previous remark imply that $\{\Pe T_{\Pe}\}$ converge strongly in $H^1$, to the unique optimizer $T_0$ corresponding to $\vec{u}_0$.
\end{remark}
\begin{proof}
The proof is a tightening of the argument behind \cref{lem:bdd-limit}. The upper bound
    \begin{equation}\label{eq:asymptotic-upper-bd}
        \limsup_{\Pe\to\infty}
    \min_{\substack{\vec{u}(\vec{x}) \\ \|\vec{u}\|_X\leq\Pe \\ \vec{u}\cdot\nabla T = \Delta T + f}}
\, \Pe^2 \fint_\Omega |\nabla T|^2\dVolume
    \leq
    \min_{\substack{\vec{u}_0(\vec{x}),T_0(\vec{x}) \\ \|\vec{u}_0\|_{X} \leq 1\\
    \vec{u}_0\cdot\nabla T_0=f
    }
    }\, \fint_\Omega |\nabla T_0|^2\dVolume
    \end{equation}
follows just as in the `if' part of the lemma. In particular, any admissible $(\vec{u}_0,T_0)$ with  $\|\vec{u}_0\|_X = 1$ on the right satisfies
\begin{equation}\label{eq:upper-bd-step}
\Pe^2\fint_\Omega |\nabla T_{\Pe}|^2 \leq \fint_\Omega |\nabla T_0|^2
\end{equation}
where $T_{\Pe}$ solves the advection-diffusion equation with $\vec{u}_{\Pe}=\Pe \vec{u}_0$ (use $\eta = \Pe^{-1} T_0$ in \cref{th:steady-VP}). The desired inequality \cref{eq:asymptotic-upper-bd} follows from minimizing over $(\vec{u}_0,T_0)$. In particular, when $f$ is not identically zero we can discard the case $\|\vec{u}_0\|_X < 1$ as being sub-optimal per \cref{rem:increasing-norm}; if $f$ is identically zero, there is nothing to show.  

Next, we show the lower bound
  \begin{equation}\label{eq:asymptotic-lower-bd}
    \min_{\substack{\vec{u}_0(\vec{x}),T_0(\vec{x}) \\ \|\vec{u}_0\|_{X} \leq 1\\
    \vec{u}_0\cdot\nabla T_0=f
    }
    }\, \fint_\Omega |\nabla T_0|^2\dVolume
    \leq
    \liminf_{\Pe\to\infty}
    \min_{\substack{\vec{u}(\vec{x}) \\ \|\vec{u}\|_X\leq\Pe \\ \vec{u}\cdot\nabla T = \Delta T + f}}
\, \Pe^2 \fint_\Omega |\nabla T|^2\dVolume.
    \end{equation}
Start by considering a general admissible sequence $\{\vec{u}_{\Pe}\}$ on the right, with $\|\vec{u}_{\Pe}\|_X \leq \Pe$ and whose temperatures $\{T_{\Pe}\}$ can be taken to obey $ \fint_\Omega |\nabla T_{\Pe}|^2 \lesssim \Pe^{-2}$ as otherwise there is nothing to show. Again following the proof of \cref{lem:bdd-limit}, we rescale to $\{(Pe^{-1}\vec{u}_{\Pe},\Pe T_{\Pe})\}$ and extract a weak-$*$ limit point $(\vec{u}_0,T_0)\in X\times H^1(\Omega)$ solving the pure advection system \cref{eq:examples:pure-advection}. Moreover,
    \begin{subequations}
    \begin{align}
        \|\vec{u}_0\|_X
        &\leq \liminf_{\Pe\to \infty}\, \|\Pe^{-1}\vec{u}_{\Pe}\|_X \leq 1,\\
        \fint_\Omega \abs{\nabla T_0}^2 \dVolume
        &\leq \liminf_{\Pe\to \infty}\, \Pe^2 \fint_\Omega \abs{\nabla T_{\Pe}}^2 \dVolume \label{eq:lower-bd-step}
    \end{align}
    \end{subequations}
 by the weak-$*$ lower semi-continuity of (dual) norms. Minimizing over all sequences $\{\vec{u}_{\Pe}\}$ with the above properties yields \cref{eq:asymptotic-lower-bd}. At this stage, it is clear that both inequalities in \cref{eq:asymptotic-upper-bd} and \cref{eq:asymptotic-lower-bd} are actually equalities, so \cref{eq:ex:asympt-limit} is proved.

 We end with the claim regarding the pairing between weak-$*$ limit points of almost minimizers $\{\vec{u}_{\Pe}\}$, which by definition obey \cref{eq:def-almost-min}, and the solutions $(\vec{u}_0,T_0)$ of the limiting problem. On the one hand, suppose $\vec{u}_0$ is optimal in the limit. Going back to the proof of the upper bound \cref{eq:asymptotic-upper-bd}, we see that the rescaled velocities $\{\Pe \vec{u}_0\}$ must be almost minimizers. In particular, the left-hand sides of \cref{eq:asymptotic-upper-bd} and the optimized version of \cref{eq:upper-bd-step} are equal up to $o(1)$ terms. Conversely, if the sequence $\{\vec{u}_{\Pe}\}$ used in the proof of \cref{eq:asymptotic-lower-bd} is almost minimizing, then the weak-$*$ limit points $(\vec{u}_0,T_0)$ found by rescaling must be optimal in the limit. This is because the left-hand sides of \cref{eq:asymptotic-lower-bd} and \cref{eq:lower-bd-step} become equal when the latter is applied to an almost minimizing sequence.
\end{proof}

\section{Internally heated buoyancy-driven flows}
\label{sec:buoyancy}
\noindent
We finally come to the problem of bounding the heat transport of an internally heated buoyancy-driven flow. As usual, we assume the source--sink function $f(\vec{x})$ is mean-free so that its heating and cooling is balanced, and suppose it is not identically zero. The velocity $\vec{u}(\vec{x},t)$ and temperature $T(\vec{x},t)$ are required to satisfy the equations
\begin{subequations}\label{e:mom}
\begin{gather}
\Pr^{-1} \left( \partial_t \vec{u} + \vec{u}\cdot \nabla \vec{u} \right) = \Delta \vec{u} + \Ra T \vec{g} - \nabla p
\label{e:mom:momentum}
\\
\partial_t T + \vec{u} \cdot \nabla T = \Delta T + f
\label{e:mom:heat}
\end{gather}
\end{subequations}
in addition to the usual divergence-free and no-penetration boundary conditions. Here, $\vec{g}(\vec{x}) = \nabla \varphi(\vec{x})$ is a conservative gravitational acceleration field with a non-constant potential $\varphi \in H^1(\Omega)$. For example, setting $\varphi=z$ gives $\vec{g} = \hat{\vec{k}}$ which is a common choice in studies of convection.

Momentum conservation implies balance laws relating the flow's mean enstrophy $\smallmean{\abs{\nabla \vec{u}}^2}$ to the Rayleigh-like number $\Ra$ measuring the strength of buoyancy relative to viscosity. These balances laws are insensitive to the Prandtl number $\Pr$, so it drops out of the analysis. (See the introduction for formulas giving $\Ra$ and $\Pr$ in terms of dimensional parameters). Requiring $\vec{u}$ to satisfy such balance laws should in principle significantly restrict heat transport. We obtain a trio of  lower bounds confirming this intuition for sources and sinks that are not aligned with gravity.

\subsection{Bounds on enstrophy-constrained flows}
\noindent
We start by deriving bounds on the heat transport achieved by general incompressible flows in terms of their mean enstrophy $\smallmean{|\nabla \vec{u}|^2}$. These follow from \cref{th:lb-BMO} and the fact that Poincar\'e's inequality allows us to relate the mean enstrophy to the mean energy $\smallmean{|\vec{u}|^2}$. Namely,%
\begin{equation}\label{e:mom:poincare}
    \langle |\vec{u}|^2 \rangle
    \leq
    \mu^2 \langle |\nabla\vec{u}|^2 \rangle
\end{equation}
for all divergence-free $\vec{u}$ with $\vec{u}\cdot \hat{\vec{n}}=0$ at $\partial\Omega$. This can be checked for an arbitrary bounded Lipschitz domain $\Omega$ using an argument-by-contradiction, with the crucial point being that the only constant flow satisfying no-penetration conditions is no flow (see, e.g., \cite{Bishop1988}). 
The optimal constant is
\begin{equation*}
    \mu^2 = \min_{\vec{u}(\vec{x})}\,
    \frac{\int_\Omega \abs{\nabla \vec{u}}^2 \dVolume }{\int_\Omega \abs{\vec{u}}^2 \dVolume}
\end{equation*}
with divergence-free and no-penetration conditions.
Applying \cref{e:mom:poincare} to the lower bound from \cref{th:lb-BMO} and eliminating the test function $\xi$ proves the following result:
\begin{corollary}\label{th:lb-enstrophy}
    Suppose the hypotheses of \cref{th:lb-BMO} hold and let $\smallmean{|\nabla\vec{u}|^2}<\infty$. There are positive constants $C_1$, $C_2$ and $C_3$ depending on the domain $\Omega$, the dimension $d$ and the source--sink distribution $f$ such that
    \begin{equation}\label{e:mom:enstrophy-bound}
    \langle |\nabla T|^2 \rangle \geq \frac{C_1}{C_2 + C_3  \langle |\nabla \vec{u}|^2\rangle}.
    \end{equation}
\end{corollary}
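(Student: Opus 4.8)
The plan is to chain together two ingredients already available in the excerpt: the kinetic-energy lower bound of \cref{th:lb-BMO} and the Poincar\'e inequality \cref{e:mom:poincare} relating mean energy to mean enstrophy. Concretely, \cref{th:lb-BMO} supplies, for every admissible non-constant test function $\xi$, the inequality
\begin{equation*}
    \langle |\nabla T|^2 \rangle
    \geq
    \frac{\left(\fint_\Omega \xi f \dVolume\right)^2}{\fint_\Omega |\nabla \xi|^2 \dVolume + C \|\xi\|_{\BMO(\Omega)}^2 \langle |\vec{u}|^2\rangle},
\end{equation*}
with $C=C(\Omega,d)$. The first step is simply to substitute $\langle |\vec{u}|^2\rangle \leq \mu^2 \langle |\nabla \vec{u}|^2\rangle$ into the denominator, which only weakens the bound and yields
\begin{equation*}
    \langle |\nabla T|^2 \rangle
    \geq
    \frac{\left(\fint_\Omega \xi f \dVolume\right)^2}{\fint_\Omega |\nabla \xi|^2 \dVolume + C \mu^2 \|\xi\|_{\BMO(\Omega)}^2 \langle |\nabla \vec{u}|^2\rangle}.
\end{equation*}

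The second step is to eliminate the test function. Since $f$ is not identically zero and mean-free, one can fix any single admissible choice of $\xi$ — for instance $\xi = \Delta^{-1} f$, which lies in $H^1(\Omega)$ (and in $L^\infty(\Omega)$ when the extra integrability of $f$ from \cref{thm:unsteady-VP} is assumed, or else one uses the $H^1$-only version from \cref{rem:extra-integrability-velocity}), and for which $\fint_\Omega \xi f \dVolume = \|f\|_{H^{-1}(\Omega)}^2 > 0$. With this choice the numerator and the two quantities $\fint_\Omega |\nabla \xi|^2 \dVolume$ and $\|\xi\|_{\BMO(\Omega)}^2$ become fixed positive constants depending only on $\Omega$, $d$ and $f$. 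Setting $C_1 = (\fint_\Omega \xi f \dVolume)^2$, $C_2 = \fint_\Omega |\nabla \xi|^2 \dVolume$ and $C_3 = C \mu^2 \|\xi\|_{\BMO(\Omega)}^2$ gives exactly \cref{e:mom:enstrophy-bound}. One should note that $C_2$ is strictly positive because $\xi$ is non-constant (as $f\not\equiv 0$), so the denominator never vanishes.

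There is really no serious obstacle here — the corollary is a two-line consequence of results proved earlier — but the one point deserving a word of care is the hypothesis that $\langle |\nabla \vec{u}|^2 \rangle < \infty$, which is needed for \cref{e:mom:poincare} to be applied meaningfully and for the right-hand side of \cref{e:mom:enstrophy-bound} to be a genuine (positive) lower bound rather than vacuously zero; this is exactly the added assumption in the statement. One could optionally remark that the freedom in choosing $\xi$ means $C_1/C_2$ can be optimized to $\|f\|_{H^{-1}(\Omega)}^2$ and $C_1/C_3$ to a multiple of $\|f\|_{\Hardy(\Omega)}^2$ as in the discussion following \cref{th:lb-BMO}, but for the bare statement any fixed admissible $\xi$ suffices.
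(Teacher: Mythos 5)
Your proposal is correct and follows exactly the paper's (one-line) proof: insert the Poincar\'e inequality \cref{e:mom:poincare} into the denominator of the bound from \cref{th:lb-BMO} and freeze a single admissible test function to produce the constants. The only slip is the sign in ``$\fint_\Omega \xi f \dVolume = \|f\|_{H^{-1}(\Omega)}^2>0$'' for $\xi=\Delta^{-1}f$ --- in fact $\fint_\Omega \Delta^{-1}f\, f\,\dVolume = -\fint_\Omega|\nabla\Delta^{-1}f|^2\dVolume<0$ --- but this is immaterial since only the square $\left(\fint_\Omega \xi f \dVolume\right)^2$ enters the bound.
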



\begin{remark}
    For flows in dimensions $d=2,3$ this result does not require the second assumption on $f$ in \cref{eq:assumptions-on-f}. This follows from \cref{rem:f-assumptions} because flows with bounded mean enstrophy belong to $L^{p}(\Omega)$ at a.e.\ time for $p<2d/(d-2)$, by the Sobolev embedding theorem.
\end{remark}

\subsection{Balance laws}
\noindent
The next ingredient for deriving Rayleigh-dependent bounds on $\gradT$ is a pair of balance laws relating the mean enstrophy $\smallmean{|\nabla \vec{u}|^2}$ to the flux-based Rayleigh number $\Ra$  in the momentum equation. The first law states that the  rate of energy loss to viscous dissipation must balance the total power supplied to drive the flow:
\begin{equation}
    \label{e:mom:balance-1}
     \smallmean{\abs{\nabla \vec{u}}^2} = \mean{ \Ra T \vec{g}\cdot \vec{u}}.
\end{equation}
To prove it, dot \cref{e:mom:momentum} by $\vec{u}$ and integrate by parts in space and time, using the no-penetration conditions to drop the boundary terms.

A second balance law is obtained by testing the advection-diffusion equation \cref{e:mom:heat} against the gravitational potential $\varphi$. Recalling that $\vec{g} = \nabla\varphi$, this yields
%
%
%
\begin{equation}
\label{e:mom:balance-2}
    %
    %
    -\mean{f \varphi} = \mean{ \vec{g}\cdot(\vec{u}T - \nabla T)}.
\end{equation}
In the Boussinesq approximation, temperature and density variations are negatively proportional to one another (see, e.g., \cite{Spiegel1960}).
Thus, we can interpret this balance law as expressing a conservation of total gravitational potential energy: the change in potential energy due to the heating and cooling must balance a similar change from the total heat flux. 

Combining \cref{e:mom:balance-1} with \cref{e:mom:balance-2} and applying the Cauchy--Schwarz inequality, we deduce that
\begin{align}
    \smallmean{\abs{\nabla \vec{u}}^2}
    &= \Ra \mean{\vec{g} \cdot \nabla T -  f\varphi }
    \nonumber \\
    &\leq
    \Ra
    \smallmean{ \abs{\vec{g}}^2 }^\frac12
    \smallmean{ \abs{\nabla T}^2 }^\frac12
    - \Ra\mean{ f\varphi }.
    \label{e:mom:enstrophy-estimate}
\end{align}
This sets a Rayleigh-dependent limit on the advective intensity of buoyancy-driven internally heated flows.

\subsection{Bounds on buoyancy-driven flows}
\noindent
It is now an algebraic exercise to obtain lower bounds on the heat transport of internally heated buoyancy-driven flows. Here is the result:
\begin{theorem}
\label{th:mom:bounds}
    Let $\vec{u}(\vec{x},t)$ and $T(\vec{x},t)$ solve the Boussinesq equations \cref{e:mom} with insulating and no-penetration boundary conditions.  Let $f(\vec{x})$ be a balanced and steady source--sink distribution satisfying the assumptions of \cref{th:lb-BMO}, and take $C_1$, $C_2$ and $C_3$ to be as in \cref{th:lb-enstrophy}. We have the following bounds:
    \begin{enumerate}
    \item\label{item:phi-f-pos} If $\smallmean{f\varphi} > 0$,
    \[
    \gradT \geq \frac{\smallmean{f\varphi}^2}{\smallmean{ \abs{\vec{g}}^2} }\quad\text{for all }\Ra;
    \]
    \item\label{item:phi-f-zero} If $\smallmean{f\varphi}=0$,
    there exists $\Ra_0 > 0$ such that
    \[
    \gradT \geq \left( \frac{C_1}{2C_3\smallmean{ \abs{\vec{g}}^2 }^\frac12\Ra} \right)^\frac23
    \quad \text{for all } \Ra > \Ra_0;
    \]
    \item\label{item:phi-f-neg} If $\smallmean{f\varphi} < 0$, there exists $\Ra_1 > 0$ such that
    \[
    \gradT \geq
    \frac{C_1}{2C_2+2C_3 \abs{\mean{f\varphi}} \Ra}
    \quad \text{for all } \Ra > \Ra_1.
    \]
    \end{enumerate}
\end{theorem}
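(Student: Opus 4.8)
The plan is to feed the enstrophy--Rayleigh estimate \cref{e:mom:enstrophy-estimate} into the enstrophy-constrained lower bound \cref{th:lb-enstrophy} and then solve the resulting scalar inequality for $\gradT$. Write $D := \gradT$ and $G := \smallmean{\abs{\vec{g}}^2}^{1/2}$ for brevity, and assume $D<\infty$ since otherwise every claim is trivial. Inequality \cref{e:mom:enstrophy-estimate} says $\smallmean{\abs{\nabla\vec{u}}^2}\le\Ra G\sqrt{D}-\Ra\smallmean{f\varphi}$, while rearranging \cref{e:mom:enstrophy-bound} gives $C_1\le C_2 D+C_3 D\,\smallmean{\abs{\nabla\vec{u}}^2}$. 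Since $D\ge0$ and $\Ra G\sqrt{D}-\Ra\smallmean{f\varphi}\ge0$ whenever $\smallmean{f\varphi}\le0$, substituting the first bound into the second yields the single inequality
\[
C_1\le C_2 D+C_3\Ra G\,D^{3/2}-C_3\Ra\smallmean{f\varphi}\,D,
\]
valid for every admissible flow; each of the three asserted bounds then follows by reading this off in the regime fixed by the sign of $\smallmean{f\varphi}$.

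For the case $\smallmean{f\varphi}>0$ I would bypass \cref{th:lb-enstrophy} entirely: combining the balance laws \cref{e:mom:balance-1,e:mom:balance-2} as in the derivation of \cref{e:mom:enstrophy-estimate} gives, for $\Ra>0$, the identity $\smallmean{f\varphi}=\smallmean{\vec{g}\cdot\nabla T}-\Ra^{-1}\smallmean{\abs{\nabla\vec{u}}^2}$, hence $\smallmean{f\varphi}\le\smallmean{\vec{g}\cdot\nabla T}\le G\sqrt{D}$ by Cauchy--Schwarz, and squaring gives $D\ge\smallmean{f\varphi}^2/\smallmean{\abs{\vec{g}}^2}$. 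At $\Ra=0$ the energy balance \cref{e:mom:balance-1} forces $\smallmean{\abs{\nabla\vec{u}}^2}=0$, hence $\vec{u}\equiv\vec{0}$ in the time-averaged $L^2$ sense by the Poincar\'e inequality \cref{e:mom:poincare}, so $\smallmean{\vec{g}\cdot\vec{u}T}=0$ and the same Cauchy--Schwarz step applies through \cref{e:mom:balance-2}; this covers all $\Ra$.

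For the remaining two cases I would argue by contradiction from the displayed scalar inequality. If $\smallmean{f\varphi}=0$ it reads $C_1\le C_2 D+C_3\Ra G\,D^{3/2}$; assuming $D<(C_1/(2C_3 G\Ra))^{2/3}$ makes the cubic term strictly less than $C_1/2$, which forces $D\ge C_1/(2C_2)$, and these two bounds on $D$ are incompatible once $\Ra$ exceeds the explicit threshold $\Ra_0=\sqrt{2}\,C_2^{3/2}/(C_3\sqrt{C_1}\,G)$. If $\smallmean{f\varphi}<0$, set $\abs{\smallmean{f\varphi}}=-\smallmean{f\varphi}$ so the inequality becomes $C_1\le D(C_2+C_3\abs{\smallmean{f\varphi}}\Ra)+C_3\Ra G\,D^{3/2}$; assuming $D<C_1/(2C_2+2C_3\abs{\smallmean{f\varphi}}\Ra)$ makes the first group strictly less than $C_1/2$, so the cubic term must exceed $C_1/2$, giving $D>(C_1/(2C_3 G\Ra))^{2/3}$, which together with the standing upper bound $D<C_1/(2C_3\abs{\smallmean{f\varphi}}\Ra)$ is impossible once $\Ra>\Ra_1=C_1\smallmean{\abs{\vec{g}}^2}/(2C_3\abs{\smallmean{f\varphi}}^3)$. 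Tracking the constants through these elementary manipulations produces the stated $\Ra_0$ and $\Ra_1$.

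I do not expect a genuine obstacle; the computations are routine, and all regularity, steadiness and integrability hypotheses are already packaged into \cref{th:lb-BMO}, \cref{th:lb-enstrophy} and the balance laws. The one point worth flagging in the write-up is \emph{why} the largeness condition on $\Ra$ cannot be dropped in the last two cases: the buoyancy-flux term in \cref{e:mom:enstrophy-estimate} is controlled by Cauchy--Schwarz as $\Ra G\sqrt{D}$, producing the term $C_3\Ra G\,D^{3/2}$ above, and on the anticipated scalings $D\sim\Ra^{-2/3}$ (resp.\ $D\sim\Ra^{-1}$) this behaves like $\Ra^{2/3}$ (resp.\ $\Ra^{1/2}$), which is subdominant to $C_2$ (resp.\ to $C_3\abs{\smallmean{f\varphi}}\Ra$) only once $\Ra$ is past the crossover --- precisely $\Ra_0$ (resp.\ $\Ra_1$).
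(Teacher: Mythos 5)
Your proposal is correct and follows essentially the same route as the paper: both feed the enstrophy estimate \cref{e:mom:enstrophy-estimate} into the enstrophy-constrained bound \cref{e:mom:enstrophy-bound} to obtain a scalar inequality in $\gradT$ (the paper's \cref{e:mom:gradT-bound-proof-step}, which is your cleared-denominator version) and then run the same sign-of-$\smallmean{f\varphi}$ case analysis via contradiction/dichotomy. Your only departures are cosmetic improvements --- explicit values for $\Ra_0$ and $\Ra_1$ (your $\Ra_1$ coincides with the threshold the paper records) and explicit treatment of $\Ra=0$ in case~\ref{item:phi-f-pos}, which the paper glosses over.
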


\begin{remark}
    \Cref{th:mom:bounds} actually applies to all divergence-free and no-penetration velocities $\vec{u}$ and temperatures $T$ that need not solve the Boussinesq equations, but only satisfy the balance laws \cref{e:mom:balance-2} and the (time-averaged) energy inequality $\smallmean{\abs{\nabla \vec{u}}^2} \leq \smallmean{ \Ra T \vec{g}\cdot \vec{u}}$, which is a weakening of \cref{e:mom:balance-1}.
    These conditions, and hence our bounds, hold for Leray--Hopf solutions of \cref{e:mom} (see \cite{Choffrut2016,Nobili2023} for similar comments in the context of Rayleigh--B\'enard convection).
\end{remark}
\begin{proof}
    Statement~\ref{item:phi-f-pos} is a direct consequence of estimate \cref{e:mom:enstrophy-estimate}, the nonnegativity of $\smallmean{\abs{\nabla \vec{u}}^2}$ and the positivity of $\smallmean{f\varphi}$.

    For the other two statements, start by combining \cref{e:mom:enstrophy-estimate} with the general lower bound in \cref{e:mom:enstrophy-bound} to deduce that
    \begin{equation}\label{e:mom:gradT-bound-proof-step}
    \gradT \geq
    \frac{C_1}{C_2
    + C_3\Ra\smallmean{ \abs{\vec{g}}^2 }^\frac12 \gradT^\frac12
    + C_3\Ra \abs{\mean{f\varphi}} }.
    \end{equation}
To prove statement~\ref{item:phi-f-zero}, set $\smallmean{f\varphi}=0$ to obtain
    \begin{equation}\label{e:mom:gradT-bound-proof-zero-PE}
        \gradT \geq
        \frac{C_1}{C_2 + C_3 \Ra\smallmean{ \abs{\vec{g}}^2 }^\frac12 \gradT^\frac12}.
    \end{equation}
    This implies that
    $\gradT \geq C_1/(2C_2)$
    if
    $\gradT \leq C_2^2/( C_3^2 \Ra^2 \smallmean{ |\vec{g}|^2 })$, a contradiction if $\Ra$ is sufficently large. Thus, we can find $\Ra_0>0$ so that $\gradT \geq C_2^2/( C_3^2 \Ra^2 \smallmean{ |\vec{g}|^2 })$ if $\Ra > R_0$. With this,  \cref{e:mom:gradT-bound-proof-zero-PE} follows from the stronger bound
    \begin{equation*}
    \gradT \geq \left( \frac{C_1}{2C_3\smallmean{ \abs{\vec{g}}^2 }^\frac12\Ra} \right)^\frac23.
    \end{equation*}

    Statement~\ref{item:phi-f-neg} follows analogously. If
    \begin{equation}\label{e:mom:upper-bound-T}
        C_3\Ra\smallmean{ \abs{\vec{g}}^2 }^\frac12 \gradT^\frac12 \leq C_2 + C_3\Ra\abs{\mean{f\varphi}},
    \end{equation}
    then~\cref{e:mom:gradT-bound-proof-step} implies
    \begin{equation}\label{e:mom:1/R-bound-a}
        \gradT \geq \frac{C_1}{2C_2+2C_3\abs{\mean{f\varphi}}\Ra}.
    \end{equation}
    This is consistent with the assumed upper bound \cref{e:mom:upper-bound-T} only when
    $C_2 + C_3 \smash{\abs{\smallmean{f\varphi}}}\Ra \geq
    ({C_1 C_3^2}/{2})^{1/3} \smash{\smallmean{|\vec{g}|^2}^{1/3} \Ra^{2/3}}$,
    which is true when $\Ra$ is sufficiently small or large, and in particular for
    $\Ra \geq \smash{(C_1 \smallmean{|\vec{g}|^2})/(2 C_3 \abs{\smallmean{f\varphi}}^3)}$.
    For all other values of $\Ra$ we must instead have
    the opposite of \cref{e:mom:upper-bound-T}, in which case  \cref{e:mom:gradT-bound-proof-step} gives
    $\gradT \geq [ C_1/(2C_3\smallmean{ \abs{\vec{g}}^2 }^\frac12\Ra)]^{2/3}$.
    In this case,
    \begin{equation}\label{e:mom:1/R-bound-b}
        \gradT \geq \max\left\{
        \frac{1}{\smallmean{|\vec{g}|^2}}\left( \frac{C_2}{C_3\Ra} + \abs{\mean{f\varphi}} \right)^2
        ,\;
        \left( \frac{C_1}{2C_3\smallmean{ \abs{\vec{g}}^2 }^\frac12 \Ra} \right)^\frac23
        \right\}.
    \end{equation}
    If $\Ra$ is large enough that both lower bounds \cref{e:mom:1/R-bound-a,e:mom:1/R-bound-b} are possible, we must choose the weakest of the two. There clearly exists a large enough $\Ra_1>0$ so that \cref{e:mom:1/R-bound-a} is the weakest bound for $\Ra \geq \Ra_1$.
\end{proof}

We close by discussing the physical meaning of $\smallmean{f\varphi}$ and the role it plays in \cref{th:mom:bounds}. As was mentioned briefly after \cref{e:mom:balance-2}, under the Boussinesq approximation temperature variations $\delta T$ in the fluid are negatively proportional to density variations $\delta\rho$ via the coefficient of thermal expansion. So, $f$ can be thought of not only as a distributed heat source/sink but also as a sink/source of density.
In this light, the three cases in \cref{th:mom:bounds} have to do with whether there is a net negative, zero or positive supply of gravitational potential energy from $f$. With a positive supply, a strongly convecting and perhaps turbulent flow can result, leading to highly efficient heat transport consistent with our third bound ($\smallmean{f\varphi}<0$). In contrast, a zero or negative potential energy supply inhibits convection and with it heat transport. This is reflected by the significant barriers to heat transport expressed in the first and second bounds ($\smallmean{f\varphi}>0$ or $=0$). We wonder whether, in these cases, turbulence could in some sense be ruled out.

\section{Conclusion}
\label{sec:discussion}
\noindent
This paper discussed heat transport by incompressible flows in an insulated domain with a balanced distribution of heat sources and sinks. When the temperature $T(\vec{x},t)$ is a passive scalar that diffuses and is advected by a divergence-free and no-penetration velocity field $\vec{u}(\vec{x},t)$, we showed in \cref{sec:univ-lower-bounds} that
\begin{equation}\label{e:conclusion:passive-bound}
\gradT \geq \frac{\left(\fint_\Omega \xi f\dVolume\right)^2}{\fint_\Omega \abs{\nabla \xi}^2 \dVolume + C(\Omega,d)\|\xi\|_{\BMO(\Omega)}^2 \mean{|\vec{u}|^2}}.
\end{equation}
This bound holds for mean-free and steady source--sink functions $f(\vec{x})$, with a constant $C(\Omega,d)$ depending on the flow domain $\Omega$ and the dimension $d\geq 2$. It involves a choice of test function $\xi(\vec{x})$ which can be optimized to obtain a best-case lower bound (see \cref{th:lb-BMO}, and also \cref{rem:bmo-bound-unsteady-case} which discusses unsteady $f$ and $\xi$).
Actually, \cref{e:conclusion:passive-bound} derives from a more general bound on the heat transport of unsteady source--sink functions and flows, proved in \cref{sec:nomomentum} with a complementary upper bound. As shown by \cref{th:steady-VP}, these bounds are sharp if both $\vec{u}(\vec{x})$ and $f(\vec{x})$ are steady.

We then applied our bounding framework to construct optimal, or at least highly competitive, flows. One example in \cref{ss:lower-bounds-example} was of a two-dimensional cellular flow adapted to sinusoidal heating and cooling. A second example involved a pinching flow between  concentrated sources and sinks.
The latter highlighted our use of Hardy and BMO norms, which came with the choice to apply the div-curl inequality of Coifman, Lions, Meyers and Semmes \cite{Coifman1990} to control the non-local term $\smallmean{|\nabla \Delta^{-1} \vec{u} \cdot \nabla \xi|^2}$ in an intermediate step. This extended the estimates of \cite{Shaw2007, Doering2006, Thiffeault2012} from the setting of statistically homogeneous and isotropic flows in periodic domains to general flows and domains. The Hardy space norm was pivotal for identifying the optimal scaling of $\min\, \gradT$ with respect to the size of the sources and sinks, and for showing the (near) optimality of our pinching flows. The status of pinching flows for other objectives such as $\langle T^2\rangle$, or in higher dimensions with $d>2$, remains to be seen. 

More generally, for a fixed distribution of heating and cooling $f(\vec{x})$ such that the pure and steady advection equation $\vec{u}\cdot \nabla T = f$ is solvable, we showed the convergence
\begin{equation}\label{e:conclusion:asymptotics}
    \min_{\substack{\vec{u}(\vec{x}) \\ \|\vec{u}\|_X\leq\Pe \\ \vec{u}\cdot\nabla T = \Delta T + f}}
    \, \Pe^2 \fint_\Omega |\nabla T|^2\dVolume
  \   \to \
    \min_{\substack{\vec{u}_0(\vec{x}),T_0(\vec{x}) \\ \|\vec{u}_0\|_{X} \leq 1\\
    \vec{u}_0\cdot\nabla T_0=f
    }
    }\, \fint_\Omega |\nabla T_0|^2\dVolume\quad\text{as }\Pe\to\infty
\end{equation}
where the P\'eclet number $\Pe$ set the maximum flow intensity measured in a Banach space norm, $||\cdot||_X$. The argument in \cref{sec:asymptotics} showed that the minimum values converge, and also that the minimizers (and almost minimizers) of the finite-$\Pe$ problems on the left-hand side of  \cref{e:conclusion:asymptotics} converge to those of the limiting problem on its right. Whether or not the pure advection equation is actually solvable is a fascinating and generally open question, even in two dimensions (see \cite{Lindberg2017_Hardy} and the references therein). Solving it was a key part of the examples in \cref{ss:lower-bounds-example}.

Finally, we leveraged balance laws implied by momentum conservation to produce lower bounds on $\gradT$ for buoyancy-driven internally heated flows. \Cref{sec:buoyancy} considered flows driven by steady heating and cooling $f(\vec{x})$ and a steady conservative gravitational acceleration $\vec{g}(\vec{x})=\nabla\varphi(\vec{x})$, with the standard setting being $\varphi=z$. For asymptotically large values of the flux-based Rayleigh number $R$, we proved that
\begin{equation}\label{e:conclusion:boussinesq-bounds}
    \gradT \gtrsim_{\Omega, d,f,\vec{g}} \begin{cases}
    1 &\text{if }\mean{f\varphi}>0\\
    \Ra^{-2/3} &\text{if }\mean{f\varphi}=0\\
    \Ra^{-1} &\text{if }\mean{f\varphi}<0
    \end{cases}
\end{equation}
with a prefactor depending on the domain $\Omega$, the dimension $d$, the source--sink distribution $f$ and the gravity $\vec{g}$. The three scaling regimes distinguish whether the spatial arrangement of the heat sources and sinks supplies the fluid with a net negative, zero or positive input of gravitational potential energy. Quite naturally, in the first two cases buoyancy-driven flows have severely limited heat transfer. This leaves open questions about the actual flow. We wonder if  $\smallmean{f\varphi}> 0$ implies that `turbulence' cannot occur, as one might expect it to produce well-mixed temperatures with $\gradT \ll 1$. It should be especially interesting to investigate the borderline case $\smallmean{f\varphi}=0$, where the possibility of turbulence may be sensitive to the details of the setup (e.g., the shape of the flow domain, or the fine details of the heat sources and sinks versus the gravity).

Contrary to our lower bounds on passive advection-diffusion, we do not know if the estimates in  \cref{e:conclusion:boussinesq-bounds} are ever sharp, or even if they depend optimally on $\Ra$. Turbulent flows observed in experiments and simulations with $\smallmean{f\varphi}<0$ have $\gradT \sim \Ra^{-1/2}$ \cite{Lepot2018,Bouillaut2019,Kazemi2022,Miquel2019}, which is much larger than our lower bound. This gap is in line with the broader literature on convection.
For instance, with uniformly heated convection between cool boundaries, the lower bound $\gradT \gtrsim \Ra^{-1/3}$ by Lu and Doering~\cite{Lu2004} is far from the $\gradT \sim \Ra^{-1/5}$ scaling observed in simulations~\cite{Goluskin2012pla}.
Similarly, upper-bound theory for boundary-driven Rayleigh--B\'enard convection proves heat transport bounds that grow with a `mixing length' scaling \cite{Doering1996pre}, while most turbulent data displays a slower boundary-limited scaling law \cite{Doering2019jfm,Doering2020pnas}.
This does not mean that the bounds are never sharp --- the \emph{a priori} scaling bounds just mentioned are sharp up to possible logarithmic corrections~\cite{Tobasco2017} and without log-corrections in three dimensions~\cite{Kumar2022branching} for general enstrophy-constrained  flows. Likewise, the known bounds on imbalanced internal heating are sharp up to log-corrections~\cite{Tobasco2022}, and this paper has produced sharp bounds for balanced heating has well. In any case, power-law improvements of the known \emph{a priori} bounds must use information from the momentum equation beyond the usual balance laws.

None of this rules out the possibility that there exist other, non-turbulent solutions of the Boussinesq equations for which $\gradT$ displays the same scaling as the lower bounds in \cref{e:conclusion:boussinesq-bounds}.
In fact, an asymptotic construction and numerical simulations in~\cite{Miquel2019} produce steady flows achieving $\gradT \sim \Ra^{-1}$ in a two-dimensional box with insulating vertical boundaries, isothermal bottom boundaries and a sinusoidal heating and cooling profile. Our bounds extend to this configuration with the same scaling results, and different prefactors. Perhaps this suggests the bounds are sharp, or perhaps there are still obstructions to sharpness that have to do with the particular choice of heating and cooling. Enunciating the conditions under which asymptotically optimal heat transport is achievable by momentum-conserving flows remains an interesting open problem.

\paragraph{Open Access Statement} For the purpose of open access, the authors have applied a Creative Commons Attribution (CC BY) license to any Author Accepted Manuscript version arising.

\paragraph{Data Access Statement} No new data were generated or analyzed during this study.



\bibliographystyle{elsarticle-num}
\bibliography{reflist}

\end{document}